\newtheorem{theorem}{Theorem}
\theoremstyle{plain}
\newtheorem{axiom}{Axiom}
\newtheorem{conjecture}{Conjecture}
\newtheorem{corollary}{Corollary}
\newtheorem{definition}{Definition}
\newtheorem{example}{Example}
\newtheorem{exercise}{Exercise}
\newtheorem{lemma}{Lemma}
\newtheorem{proposition}{Proposition}
\newtheorem{remark}{Remark}
\numberwithin{equation}{section}
\numberwithin{theorem}{section}
\numberwithin{algorithm}{section}
\numberwithin{axiom}{section}
\numberwithin{case}{section}
\numberwithin{claim}{section}
\numberwithin{conclusion}{section}
\numberwithin{condition}{section}
\numberwithin{conjecture}{section}
\numberwithin{corollary}{section}
\numberwithin{criterion}{section}
\numberwithin{definition}{section}
\numberwithin{example}{section}
\numberwithin{exercise}{section}
\numberwithin{lemma}{section}
\numberwithin{notation}{section}
\numberwithin{problem}{section}
\numberwithin{proposition}{section}
\numberwithin{remark}{section}
\numberwithin{solution}{section}
\chardef\@x10\chardef\@xv60
\def\tcitime{
\def\@time{%
  \@minute\time\@hour\@minute\divide\@hour\@xv
  \ifnum\@hour<\@x 0\fi\the\@hour:%
  \multiply\@hour\@xv\advance\@minute-\@hour
  \ifnum\@minute<\@x 0\fi\the\@minute
  }}%
\def\QCTOpt[#1]#2{%
  \def\QCTOptB{#1}
  \def\QCTOptA{#2}
}
\def\QCTNOpt#1{%
  \def\QCTOptA{#1}
  \let\QCTOptB\empty
}
\def\Qct{%
  \@ifnextchar[{%
    \QCTOpt}{\QCTNOpt}
}
\def\QCBOpt[#1]#2{%
  \def\QCBOptB{#1}
  \def\QCBOptA{#2}
}
\def\QCBNOpt#1{%
  \def\QCBOptA{#1}
  \let\QCBOptB\empty
}
\def\Qcb{%
  \@ifnextchar[{%
    \QCBOpt}{\QCBNOpt}
}
\def\PrepCapArgs{%
  \ifx\QCBOptA\empty
    \ifx\QCTOptA\empty
      {}%
    \else
      \ifx\QCTOptB\empty
        {\QCTOptA}%
      \else
        [\QCTOptB]{\QCTOptA}%
      \fi
    \fi
  \else
    \ifx\QCBOptA\empty
      {}%
    \else
      \ifx\QCBOptB\empty
        {\QCBOptA}%
      \else
        [\QCBOptB]{\QCBOptA}%
      \fi
    \fi
  \fi
}
\def\GRAPHICSPS#1{%
 \ifcase\GRAPHICSTYPE
   \special{ps: #1}%
 \or
   \special{language "PS", include "#1"}%
 \fi
}%
\def\graffile#1#2#3#4{%
    \bgroup
    \leavevmode
    \@ifundefined{bbl@deactivate}{\def~{\string~}}{\activesoff}
    \raise -#4 \BOXTHEFRAME{%
        \hbox to #2{\raise #3\hbox to #2{\null #1\hfil}}}%
    \egroup
}%
\def\draftbox#1#2#3#4{%
 \leavevmode\raise -#4 \hbox{%
  \frame{\rlap{\protect\tiny #1}\hbox to #2%
   {\vrule height#3 width\z@ depth\z@\hfil}%
  }%
 }%
}%
\newif\ifwasdraft
\def\GRAPHIC#1#2#3#4#5{%
 \ifnum\draft=\@ne\draftbox{#2}{#3}{#4}{#5}%
  \else\graffile{#1}{#3}{#4}{#5}%
  \fi
 }%
\def\addtoLaTeXparams#1{%
    \edef\LaTeXparams{\LaTeXparams #1}}%
\newif\ifBoxFrame \BoxFramefalse
\newif\ifOverFrame \OverFramefalse
\newif\ifUnderFrame \UnderFramefalse
\def\BOXTHEFRAME#1{%
   \hbox{%
      \ifBoxFrame
         \frame{#1}%
      \else
         {#1}%
      \fi
   }%
}
\def\doFRAMEparams#1{\BoxFramefalse\OverFramefalse\UnderFramefalse\readFRAMEparams#1\end}%
\def\readFRAMEparams#1{%
 \ifx#1\end%
  \let\next=\relax
  \else
  \ifx#1i\dispkind=\z@\fi
  \ifx#1d\dispkind=\@ne\fi
  \ifx#1f\dispkind=\tw@\fi
  \ifx#1t\addtoLaTeXparams{t}\fi
  \ifx#1b\addtoLaTeXparams{b}\fi
  \ifx#1p\addtoLaTeXparams{p}\fi
  \ifx#1h\addtoLaTeXparams{h}\fi
  \ifx#1X\BoxFrametrue\fi
  \ifx#1O\OverFrametrue\fi
  \ifx#1U\UnderFrametrue\fi
  \ifx#1w
    \ifnum\draft=1\wasdrafttrue\else\wasdraftfalse\fi
    \draft=\@ne
  \fi
  \let\next=\readFRAMEparams
  \fi
 \next
 }%
\def\IFRAME#1#2#3#4#5#6{%
      \bgroup
      \let\QCTOptA\empty
      \let\QCTOptB\empty
      \let\QCBOptA\empty
      \let\QCBOptB\empty
      #6%
      \parindent=0pt%
      \leftskip=0pt
      \rightskip=0pt
      \setbox0 = \hbox{\QCBOptA}%
      \@tempdima = #1\relax
      \ifOverFrame
          \typeout{This is not implemented yet}%
          \show\HELP
      \else
         \ifdim\wd0>\@tempdima
            \advance\@tempdima by \@tempdima
            \ifdim\wd0 >\@tempdima
               \textwidth=\@tempdima
               \setbox1 =\vbox{%
                  \noindent\hbox to \@tempdima{\hfill\GRAPHIC{#5}{#4}{#1}{#2}{#3}\hfill}\\%
                  \noindent\hbox to \@tempdima{\parbox[b]{\@tempdima}{\QCBOptA}}%
               }%
               \wd1=\@tempdima
            \else
               \textwidth=\wd0
               \setbox1 =\vbox{%
                 \noindent\hbox to \wd0{\hfill\GRAPHIC{#5}{#4}{#1}{#2}{#3}\hfill}\\%
                 \noindent\hbox{\QCBOptA}%
               }%
               \wd1=\wd0
            \fi
         \else
            \ifdim\wd0>0pt
              \hsize=\@tempdima
              \setbox1 =\vbox{%
                \unskip\GRAPHIC{#5}{#4}{#1}{#2}{0pt}%
                \break
                \unskip\hbox to \@tempdima{\hfill \QCBOptA\hfill}%
              }%
              \wd1=\@tempdima
           \else
              \hsize=\@tempdima
              \setbox1 =\vbox{%
                \unskip\GRAPHIC{#5}{#4}{#1}{#2}{0pt}%
              }%
              \wd1=\@tempdima
           \fi
         \fi
         \@tempdimb=\ht1
         \advance\@tempdimb by \dp1
         \advance\@tempdimb by -#2%
         \advance\@tempdimb by #3%
         \leavevmode
         \raise -\@tempdimb \hbox{\box1}%
      \fi
      \egroup%
}%
\def\DFRAME#1#2#3#4#5{%
 \begin{center}
     \let\QCTOptA\empty
     \let\QCTOptB\empty
     \let\QCBOptA\empty
     \let\QCBOptB\empty
     \ifOverFrame 
        #5\QCTOptA\par
     \fi
     \GRAPHIC{#4}{#3}{#1}{#2}{\z@}
     \ifUnderFrame 
        \nobreak\par\nobreak#5\QCBOptA
     \fi
 \end{center}%
 }%
\def\FFRAME#1#2#3#4#5#6#7{%
 \begin{figure}[#1]%
  \let\QCTOptA\empty
  \let\QCTOptB\empty
  \let\QCBOptA\empty
  \let\QCBOptB\empty
  \ifOverFrame
    #4
    \ifx\QCTOptA\empty
    \else
      \ifx\QCTOptB\empty
        \caption{\QCTOptA}%
      \else
        \caption[\QCTOptB]{\QCTOptA}%
      \fi
    \fi
    \ifUnderFrame\else
      \label{#5}%
    \fi
  \else
    \UnderFrametrue%
  \fi
  \begin{center}\GRAPHIC{#7}{#6}{#2}{#3}{\z@}\end{center}%
  \ifUnderFrame
    #4
    \ifx\QCBOptA\empty
      \caption{}%
    \else
      \ifx\QCBOptB\empty
        \caption{\QCBOptA}%
      \else
        \caption[\QCBOptB]{\QCBOptA}%
      \fi
    \fi
    \label{#5}%
  \fi
  \end{figure}%
 }%
\def\makeactives{
  \catcode`\"=\active
  \catcode`\;=\active
  \catcode`\:=\active
  \catcode`\'=\active
  \catcode`\~=\active
}
   \gdef\activesoff{%
      \def"{\string"}
      \def;{\string;}
      \def:{\string:}
      \def'{\string'}
      \def~{\string~}
    }
\def\FRAME#1#2#3#4#5#6#7#8{%
 \bgroup
 \ifnum\draft=\@ne
   \wasdrafttrue
 \else
   \wasdraftfalse%
 \fi
 \def\LaTeXparams{}%
 \dispkind=\z@
 \def\LaTeXparams{}%
 \doFRAMEparams{#1}%
 \ifnum\dispkind=\z@\IFRAME{#2}{#3}{#4}{#7}{#8}{#5}\else
  \ifnum\dispkind=\@ne\DFRAME{#2}{#3}{#7}{#8}{#5}\else
   \ifnum\dispkind=\tw@
    \edef\@tempa{\noexpand\FFRAME{\LaTeXparams}}%
    \@tempa{#2}{#3}{#5}{#6}{#7}{#8}%
    \fi
   \fi
  \fi
  \ifwasdraft\draft=1\else\draft=0\fi{}%
  \egroup
 }%
\def\TEXUX#1{"texux"}
\long\def\QQQ#1#2{%
     \long\expandafter\def\csname#1\endcsname{#2}}%
\long\def\QQA#1#2{}%
\def\QTR#1#2{{\csname#1\endcsname #2}}
\def\EXPAND#1[#2]#3{}%
\def\NOEXPAND#1[#2]#3{}%
\def\LaTeXparent#1{}%
\def\ChildStyles#1{}%
\def\ChildDefaults#1{}%
\def\QTagDef#1#2#3{}%
  \providecommand{\UNICODE}[2][]{}
\def\QQfnmark#1{\footnotemark}
 \def\abstract{%
  \if@twocolumn
   \section*{Abstract (Not appropriate in this style!)}%
   \else \small 
   \begin{center}{\bf Abstract\vspace{-.5em}\vspace{\z@}}\end{center}%
   \quotation 
   \fi
  }%
   \def\registered{\relax\ifmmode{}\r@gistered
                    \else$\m@th\r@gistered$\fi}%
 \def\r@gistered{^{\ooalign
  {\hfil\raise.07ex\hbox{$\scriptstyle\rm\text{R}$}\hfil\crcr
  \mathhexbox20D}}}}{}%
\newdimen\theight
\def\Column{%
 \vadjust{\setbox\z@=\hbox{\scriptsize\quad\quad tcol}%
  \theight=\ht\z@\advance\theight by \dp\z@\advance\theight by \lineskip
  \kern -\theight \vbox to \theight{%
   \rightline{\rlap{\box\z@}}%
   \vss
   }%
  }%
 }%
\def\qed{%
 \ifhmode\unskip\nobreak\fi\ifmmode\ifinner\else\hskip5\p@\fi\fi
 \hbox{\hskip5\p@\vrule width4\p@ height6\p@ depth1.5\p@\hskip\p@}%
 }%
\def\miss{\hbox{\vrule height2\p@ width 2\p@ depth\z@}}%
\def\tcol#1{{\baselineskip=6\p@ \vcenter{#1}} \Column}  %
\def\newfmtname{LaTeX2e}
  \DeclareOldFontCommand{\rm}{\normalfont\rmfamily}{\mathrm}
  \DeclareOldFontCommand{\sf}{\normalfont\sffamily}{\mathsf}
  \DeclareOldFontCommand{\tt}{\normalfont\ttfamily}{\mathtt}
  \DeclareOldFontCommand{\bf}{\normalfont\bfseries}{\mathbf}
  \DeclareOldFontCommand{\it}{\normalfont\itshape}{\mathit}
  \DeclareOldFontCommand{\sl}{\normalfont\slshape}{\@nomath\sl}
  \DeclareOldFontCommand{\sc}{\normalfont\scshape}{\@nomath\sc}
\def\alpha{{\Greekmath 010B}}%
\def\beta{{\Greekmath 010C}}%
\def\gamma{{\Greekmath 010D}}%
\def\delta{{\Greekmath 010E}}%
\def\epsilon{{\Greekmath 010F}}%
\def\zeta{{\Greekmath 0110}}%
\def\eta{{\Greekmath 0111}}%
\def\theta{{\Greekmath 0112}}%
\def\iota{{\Greekmath 0113}}%
\def\kappa{{\Greekmath 0114}}%
\def\lambda{{\Greekmath 0115}}%
\def\mu{{\Greekmath 0116}}%
\def\nu{{\Greekmath 0117}}%
\def\xi{{\Greekmath 0118}}%
\def\pi{{\Greekmath 0119}}%
\def\rho{{\Greekmath 011A}}%
\def\sigma{{\Greekmath 011B}}%
\def\tau{{\Greekmath 011C}}%
\def\upsilon{{\Greekmath 011D}}%
\def\phi{{\Greekmath 011E}}%
\def\chi{{\Greekmath 011F}}%
\def\psi{{\Greekmath 0120}}%
\def\omega{{\Greekmath 0121}}%
\def\varepsilon{{\Greekmath 0122}}%
\def\vartheta{{\Greekmath 0123}}%
\def\varpi{{\Greekmath 0124}}%
\def\varrho{{\Greekmath 0125}}%
\def\varsigma{{\Greekmath 0126}}%
\def\varphi{{\Greekmath 0127}}%
\def\nabla{{\Greekmath 0272}}
\def\FindBoldGroup{%
   {\setbox0=\hbox{$\mathbf{x\global\edef\theboldgroup{\the\mathgroup}}$}}%
}
\def\Greekmath#1#2#3#4{%
    \if@compatibility
        \ifnum\mathgroup=\symbold
           \mathchoice{\mbox{\boldmath$\displaystyle\mathchar"#1#2#3#4$}}%
                      {\mbox{\boldmath$\textstyle\mathchar"#1#2#3#4$}}%
                      {\mbox{\boldmath$\scriptstyle\mathchar"#1#2#3#4$}}%
                      {\mbox{\boldmath$\scriptscriptstyle\mathchar"#1#2#3#4$}}%
        \else
           \mathchar"#1#2#3#4%
        \fi 
    \else 
        \FindBoldGroup
        \ifnum\mathgroup=\theboldgroup 
           \mathchoice{\mbox{\boldmath$\displaystyle\mathchar"#1#2#3#4$}}%
                      {\mbox{\boldmath$\textstyle\mathchar"#1#2#3#4$}}%
                      {\mbox{\boldmath$\scriptstyle\mathchar"#1#2#3#4$}}%
                      {\mbox{\boldmath$\scriptscriptstyle\mathchar"#1#2#3#4$}}%
        \else
           \mathchar"#1#2#3#4%
        \fi     	    
	  \fi}
\newif\ifGreekBold  \GreekBoldfalse
\let\SAVEPBF=\pbf
\def\pbf{\GreekBoldtrue\SAVEPBF}%
  \newcounter{equationnumber}  
  \def\mathletters{%
     \addtocounter{equation}{1}
     \edef\@currentlabel{\theequation}%
     \setcounter{equationnumber}{\c@equation}
     \setcounter{equation}{0}%
     \edef\theequation{\@currentlabel\noexpand\alph{equation}}%
  }
    \def\BibTeX{{\rm B\kern-.05em{\sc i\kern-.025em b}\kern-.08em
                 T\kern-.1667em\lower.7ex\hbox{E}\kern-.125emX}}}{}%
\def\AmS{{\protect\usefont{OMS}{cmsy}{m}{n}%
                A\kern-.1667em\lower.5ex\hbox{M}\kern-.125emS}}}{}%
\def\@@eqncr{\let\@tempa\relax
    \ifcase\@eqcnt \def\@tempa{& & &}\or \def\@tempa{& &}%
      \else \def\@tempa{&}\fi
     \@tempa
     \if@eqnsw
        \iftag@
           \@taggnum
        \else
           \@eqnnum\stepcounter{equation}%
        \fi
     \fi
     \global\tag@false
     \global\@eqnswtrue
     \global\@eqcnt\z@\cr}
\def\TCItag{\@ifnextchar*{\@TCItagstar}{\@TCItag}}
\def\@TCItag#1{%
    \global\tag@true
    \global\def\@taggnum{(#1)}}
\def\@TCItagstar*#1{%
    \global\tag@true
    \global\def\@taggnum{#1}}
\let\DOTSI\relax
\def\RIfM@{\relax\ifmmode}%
\def\FN@{\futurelet\next}%
\def\iint{\DOTSI\intno@\tw@\FN@\ints@}%
\def\iiint{\DOTSI\intno@\thr@@\FN@\ints@}%
\def\iiiint{\DOTSI\intno@4 \FN@\ints@}%
\def\idotsint{\DOTSI\intno@\z@\FN@\ints@}%
\def\ints@{\findlimits@\ints@@}%
\newif\iflimtoken@
\newif\iflimits@
\def\findlimits@{\limtoken@true\ifx\next\limits\limits@true
 \else\ifx\next\nolimits\limits@false\else
 \limtoken@false\ifx\ilimits@\nolimits\limits@false\else
 \ifinner\limits@false\else\limits@true\fi\fi\fi\fi}%
\def\multint@{\int\ifnum\intno@=\z@\intdots@                          
 \else\intkern@\fi                                                    
 \ifnum\intno@>\tw@\int\intkern@\fi                                   
 \ifnum\intno@>\thr@@\int\intkern@\fi                                 
 \int}
\def\multintlimits@{\intop\ifnum\intno@=\z@\intdots@\else\intkern@\fi
 \ifnum\intno@>\tw@\intop\intkern@\fi
 \ifnum\intno@>\thr@@\intop\intkern@\fi\intop}%
\def\intic@{%
    \mathchoice{\hskip.5em}{\hskip.4em}{\hskip.4em}{\hskip.4em}}%
\def\negintic@{\mathchoice
 {\hskip-.5em}{\hskip-.4em}{\hskip-.4em}{\hskip-.4em}}%
\def\ints@@{\iflimtoken@                                              
 \def\ints@@@{\iflimits@\negintic@
   \mathop{\intic@\multintlimits@}\limits                             
  \else\multint@\nolimits\fi                                          
  \eat@}
 \else                                                                
 \def\ints@@@{\iflimits@\negintic@
  \mathop{\intic@\multintlimits@}\limits\else
  \multint@\nolimits\fi}\fi\ints@@@}%
\def\intkern@{\mathchoice{\!\!\!}{\!\!}{\!\!}{\!\!}}%
\def\plaincdots@{\mathinner{\cdotp\cdotp\cdotp}}%
\def\intdots@{\mathchoice{\plaincdots@}%
 {{\cdotp}\mkern1.5mu{\cdotp}\mkern1.5mu{\cdotp}}%
 {{\cdotp}\mkern1mu{\cdotp}\mkern1mu{\cdotp}}%
 {{\cdotp}\mkern1mu{\cdotp}\mkern1mu{\cdotp}}}%
\def\RIfM@{\relax\protect\ifmmode}
\def\text{\RIfM@\expandafter\text@\else\expandafter\mbox\fi}
\let\nfss@text\text
\def\text@#1{\mathchoice
   {\textdef@\displaystyle\f@size{#1}}%
   {\textdef@\textstyle\tf@size{\firstchoice@false #1}}%
   {\textdef@\textstyle\sf@size{\firstchoice@false #1}}%
   {\textdef@\textstyle \ssf@size{\firstchoice@false #1}}%
   \glb@settings}
\def\textdef@#1#2#3{\hbox{{%
                    \everymath{#1}%
                    \let\f@size#2\selectfont
                    #3}}}
\newif\iffirstchoice@
\def\Let@{\relax\iffalse{\fi\let\\=\cr\iffalse}\fi}%
\def\vspace@{\def\vspace##1{\crcr\noalign{\vskip##1\relax}}}%
\def\multilimits@{\bgroup\vspace@\Let@
 \baselineskip\fontdimen10 \scriptfont\tw@
 \advance\baselineskip\fontdimen12 \scriptfont\tw@
 \lineskip\thr@@\fontdimen8 \scriptfont\thr@@
 \lineskiplimit\lineskip
 \vbox\bgroup\ialign\bgroup\hfil$\m@th\scriptstyle{##}$\hfil\crcr}%
\def\Sb{_\multilimits@}%
\def\endSb{\crcr\egroup\egroup\egroup}%
\def\Sp{^\multilimits@}%
\newdimen\ex@
\def\rightarrowfill@#1{$#1\m@th\mathord-\mkern-6mu\cleaders
 \hbox{$#1\mkern-2mu\mathord-\mkern-2mu$}\hfill
 \mkern-6mu\mathord\rightarrow$}%
\def\leftarrowfill@#1{$#1\m@th\mathord\leftarrow\mkern-6mu\cleaders
 \hbox{$#1\mkern-2mu\mathord-\mkern-2mu$}\hfill\mkern-6mu\mathord-$}%
\def\leftrightarrowfill@#1{$#1\m@th\mathord\leftarrow
\mkern-6mu\cleaders
 \hbox{$#1\mkern-2mu\mathord-\mkern-2mu$}\hfill
 \mkern-6mu\mathord\rightarrow$}%
\def\overrightarrow{\mathpalette\overrightarrow@}%
\def\overrightarrow@#1#2{\vbox{\ialign{##\crcr\rightarrowfill@#1\crcr
 \noalign{\kern-\ex@\nointerlineskip}$\m@th\hfil#1#2\hfil$\crcr}}}%
\def\overleftarrow{\mathpalette\overleftarrow@}%
\def\overleftarrow@#1#2{\vbox{\ialign{##\crcr\leftarrowfill@#1\crcr
 \noalign{\kern-\ex@\nointerlineskip}$\m@th\hfil#1#2\hfil$\crcr}}}%
\def\overleftrightarrow{\mathpalette\overleftrightarrow@}%
\def\overleftrightarrow@#1#2{\vbox{\ialign{##\crcr
   \leftrightarrowfill@#1\crcr
 \noalign{\kern-\ex@\nointerlineskip}$\m@th\hfil#1#2\hfil$\crcr}}}%
\def\underrightarrow{\mathpalette\underrightarrow@}%
\def\underrightarrow@#1#2{\vtop{\ialign{##\crcr$\m@th\hfil#1#2\hfil
  $\crcr\noalign{\nointerlineskip}\rightarrowfill@#1\crcr}}}%
\def\underleftarrow{\mathpalette\underleftarrow@}%
\def\underleftarrow@#1#2{\vtop{\ialign{##\crcr$\m@th\hfil#1#2\hfil
  $\crcr\noalign{\nointerlineskip}\leftarrowfill@#1\crcr}}}%
\def\underleftrightarrow{\mathpalette\underleftrightarrow@}%
\def\underleftrightarrow@#1#2{\vtop{\ialign{##\crcr$\m@th
  \hfil#1#2\hfil$\crcr
 \noalign{\nointerlineskip}\leftrightarrowfill@#1\crcr}}}%
\def\qopnamewl@#1{\mathop{\operator@font#1}\nlimits@}
\let\nlimits@\displaylimits
\def\setboxz@h{\setbox\z@\hbox}
\def\varlim@#1#2{\mathop{\vtop{\ialign{##\crcr
 \hfil$#1\m@th\operator@font lim$\hfil\crcr
 \noalign{\nointerlineskip}#2#1\crcr
 \noalign{\nointerlineskip\kern-\ex@}\crcr}}}}
 \def\rightarrowfill@#1{\m@th\setboxz@h{$#1-$}\ht\z@\z@
  $#1\copy\z@\mkern-6mu\cleaders
  \hbox{$#1\mkern-2mu\box\z@\mkern-2mu$}\hfill
  \mkern-6mu\mathord\rightarrow$}
\def\leftarrowfill@#1{\m@th\setboxz@h{$#1-$}\ht\z@\z@
  $#1\mathord\leftarrow\mkern-6mu\cleaders
  \hbox{$#1\mkern-2mu\copy\z@\mkern-2mu$}\hfill
  \mkern-6mu\box\z@$}
\def\projlim{\qopnamewl@{proj\,lim}}
\def\injlim{\qopnamewl@{inj\,lim}}
\def\varinjlim{\mathpalette\varlim@\rightarrowfill@}
\def\varprojlim{\mathpalette\varlim@\leftarrowfill@}
\def\varliminf{\mathpalette\varliminf@{}}
\def\varliminf@#1{\mathop{\underline{\vrule\@depth.2\ex@\@width\z@
   \hbox{$#1\m@th\operator@font lim$}}}}
\def\varlimsup{\mathpalette\varlimsup@{}}
\def\varlimsup@#1{\mathop{\overline
  {\hbox{$#1\m@th\operator@font lim$}}}}
\def\align{\@verbatim \frenchspacing\@vobeyspaces \@alignverbatim
You are using the "align" environment in a style in which it is not defined.}
\let\csname endalign*\endcsname =\endtrivlist
\def\alignat{\@verbatim \frenchspacing\@vobeyspaces \@alignatverbatim
You are using the "alignat" environment in a style in which it is not defined.}
\let\csname endalignat*\endcsname =\endtrivlist
\def\xalignat{\@verbatim \frenchspacing\@vobeyspaces \@xalignatverbatim
You are using the "xalignat" environment in a style in which it is not defined.}
\let\csname endxalignat*\endcsname =\endtrivlist
\def\gather{\@verbatim \frenchspacing\@vobeyspaces \@gatherverbatim
You are using the "gather" environment in a style in which it is not defined.}
\let\csname endgather*\endcsname =\endtrivlist
\def\multiline{\@verbatim \frenchspacing\@vobeyspaces \@multilineverbatim
You are using the "multiline" environment in a style in which it is not defined.}
\let\csname endmultiline*\endcsname =\endtrivlist
\def\arrax{\@verbatim \frenchspacing\@vobeyspaces \@arraxverbatim
You are using a type of "array" construct that is only allowed in AmS-LaTeX.}
\def\tabulax{\@verbatim \frenchspacing\@vobeyspaces \@tabulaxverbatim
You are using a type of "tabular" construct that is only allowed in AmS-LaTeX.}
\let\csname endarrax*\endcsname =\endtrivlist
\let\csname endtabulax*\endcsname =\endtrivlist
 \def\endequation{%
     \ifmmode\ifinner 
      \iftag@
        \addtocounter{equation}{-1} 
        $\hfil
           \displaywidth\linewidth\@taggnum\egroup \endtrivlist
        \global\tag@false
        \global\@ignoretrue   
      \else
        $\hfil
           \displaywidth\linewidth\@eqnnum\egroup \endtrivlist
        \global\tag@false
        \global\@ignoretrue 
      \fi
     \else   
      \iftag@
        \addtocounter{equation}{-1} 
        \eqno \hbox{\@taggnum}
        \global\tag@false%
        $$\global\@ignoretrue
      \else
        \eqno \hbox{\@eqnnum}
        $$\global\@ignoretrue
      \fi
     \fi\fi
 } 
 \newif\iftag@ \tag@false
 \def\TCItag{\@ifnextchar*{\@TCItagstar}{\@TCItag}}
 \def\@TCItag#1{%
     \global\tag@true
     \global\def\@taggnum{(#1)}}
 \def\@TCItagstar*#1{%
     \global\tag@true
     \global\def\@taggnum{#1}}
     \def\tag{\@ifnextchar*{\@tagstar}{\@tag}}
     \def\@tag#1{%
         \global\tag@true
         \global\def\@taggnum{(#1)}}
     \def\@tagstar*#1{%
         \global\tag@true
         \global\def\@taggnum{#1}}
\begin{document}
\title{The sphere covering inequality and its dual}
\author{Changfeng Gui}
\address{Department of Mathematics, The University of Texas at San Antonio,
San Antonio, TX 78249}
\email{changfeng.gui@utsa.edu}
\author{Fengbo Hang}
\address{Courant Institute, New York University, 251 Mercer Street, New York
NY 10012}
\email{fengbo@cims.nyu.edu}
\author{Amir Moradifam}
\address{Department of Mathematics, University of California, Riverside, CA
92521}
\email{moradifam@math.ucr.edu}
\date{}

\begin{abstract}

We present a new proof of the sphere covering inequality in the spirit of
comparison geometry, and as a byproduct we find another sphere covering
inequality which can be viewed as the dual of the original one. We also
prove sphere covering inequalities on surfaces satisfying general isoperimetric inequalities, and discuss their applications to elliptic equations with exponential nonlinearities in dimension two.  The approach in this paper extends, improves, and unifies several
inequalities about solutions of elliptic equations with
exponential nonlinearities.

\end{abstract}

\maketitle

\section{Introduction\label{sec1}}

Second order nonlinear elliptic equations with exponential nonlinearity of the form 
\begin{equation}
\Delta u+e^{u}=f(x) \ \  \hbox{in}\ \ \Omega \subset \mathbb{R}^2,
\end{equation}
arise in many important problems in mathematics, mathematical physics and biology.  Such equations have been extensively studied in the context of Moser-Trudinger inequalities, Chern-Simons self-dual vortices, Toda systems, conformal geometry, statistical mechanics of two-dimensional turbulence, self-gravitating cosmic strings, theory of elliptic functions and hyperelliptic curves and  free boundary models of cell motility, see  \cite{BFR, BL, BL2, BLT, BCLT, BT, BT2, Be, CLS, CaY, CLMP, CLMP2, CK, CFL, CY, CY2, CY3, DJLW, GL, L, L2, LM, LM2, LW, LW2, LWY, Y} and the references cited therein. 

The sphere covering inequality was recently introduced in \cite{GM}, and has been applied to solve various problems about symmetry and uniqueness of solutions of elliptic equations with exponential nonlinearity in dimension $n=2$. It particular, it was applied to prove  a long-standing conjecture of Chang-Yang (\cite{CY}) concerning the best constant in Moser-Trudinger type inequalities \cite{GM}, and has led to several symmetry and uniqueness results for mean field equations, Onsager vortices, Sinh-Gordon equation, cosmic string equation, Toda systems and rigidity of Hawking mass in general relativity \cite{GJM, GM, GM2, GM3, LLTY, SSTW, WWY}.

\begin{theorem}[The Sphere Covering Inequality {\protect\cite[Theorem 1.1]{GM}}]
\label{thm1.1}Let $\Omega _{0}\subset \mathbb{R}^{2}$ be a simply connected
domain. Assume $u_{1}\in C^{2}\left( \overline{\Omega _{0}}\right) $ such
that%
\begin{equation}
\Delta u_{1}+e^{2u_{1}}\geq 0\text{ on }\Omega _{0},\quad \int_{\Omega
_{0}}e^{2u_{1}}dx\leq 4\pi .
\end{equation}%
Let $\Omega \subset \Omega _{0}$ be a bounded open set. Assume $u_{2}\in
C^{2}\left( \overline{\Omega }\right) $ such that%
\begin{eqnarray*}
\Delta u_{2}+e^{2u_{2}} &\geq &\Delta u_{1}+e^{2u_{1}}\text{ in }\Omega 
\text{,} \\
u_{2} &>&u_{1}\text{ in }\Omega , \\
\left. u_{2}\right\vert _{\partial \Omega } &=&\left. u_{1}\right\vert
_{\partial \Omega }.
\end{eqnarray*}%
Then%
\begin{equation}
\int_{\Omega }e^{2u_{1}}dx+\int_{\Omega }e^{2u_{2}}dx\geq 4\pi .
\end{equation}
\end{theorem}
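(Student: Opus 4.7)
The plan is geometric. View $g_i = e^{2u_i}|dx|^2$ as conformal metrics on $\Omega$, with Gaussian curvatures $K_i = -e^{-2u_i}\Delta u_i$. The hypothesis $\Delta u_1 + e^{2u_1}\geq 0$ is exactly $K_1\leq 1$, and $\Delta u_2 + e^{2u_2}\geq \Delta u_1 + e^{2u_1}$ gives an analogous (weighted) upper curvature bound on $g_2$, so in particular $K_2\leq 1$ as well. Since $u_1 = u_2$ on $\partial\Omega$, the induced arc-length elements $e^{u_i}\,ds$ agree on $\partial\Omega$, so the two metrics can be isometrically glued along their common boundary.

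First I would reduce to the case in which $\Omega$ is a topological disk (by arguing on each connected component and filling in holes using the simple-connectedness of $\Omega_0$). Then I would form the closed Lipschitz Riemannian surface $M$ by gluing $(\Omega, g_1)$ and $(\Omega, g_2)$ along $\partial\Omega$; topologically $M$ is a sphere, and its total area is precisely
\[
\mathrm{Area}(M) \;=\; \int_\Omega e^{2u_1}\,dx + \int_\Omega e^{2u_2}\,dx,
\]
the quantity to be bounded below by $4\pi$.

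The core step is to show that $(M,g_M)$ is an Alexandrov surface of curvature at most $1$ in the measure-theoretic sense. Off the gluing circle $\partial\Omega$, this is just the pointwise bound $K_i\leq 1$. Along $\partial\Omega$, the conformal-change formula $\tilde\kappa\,ds_{\tilde g} = (\kappa^{e}_{\mathrm{in}} + \partial_\nu u)\,ds$ for the inward geodesic curvature (with $\nu$ the outward Euclidean normal) shows that the singular curvature of $g_M$ is proportional to $2\kappa^{e}_{\mathrm{in}} + \partial_\nu(u_1+u_2)$. Hopf's boundary-point lemma applied to $v = u_2 - u_1 > 0$ in $\Omega$, $v = 0$ on $\partial\Omega$, gives $\partial_\nu v \leq 0$, which together with the ambient hypothesis $\int_{\Omega_0}e^{2u_1}\,dx\leq 4\pi$ --- exploited via Bol's sharp isoperimetric inequality for surfaces with $K\leq 1$ applied to $(\Omega_0, g_1)$ --- is what is needed to force the singular curvature to be nonpositive.

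Given the upper curvature bound $K_{g_M}\leq 1$ on $M\cong S^2$ in the Alexandrov sense, Gauss-Bonnet closes the argument:
\[
4\pi \;=\; \int_M K\,dA_M \;\leq\; \int_M 1\,dA_M \;=\; \mathrm{Area}(M),
\]
which is the desired inequality. The hard part will be the nonpositivity of the singular curvature on $\partial\Omega$: Hopf's lemma controls only the difference $\partial_\nu(u_2-u_1)$, whereas the singular-curvature expression involves the sum $\partial_\nu(u_1+u_2)$. Bridging this gap is where the global ``sub-spherical'' hypothesis $\int_{\Omega_0}e^{2u_1}\leq 4\pi$ must enter nontrivially, presumably through a sharp isoperimetric comparison on $(\Omega_0,g_1)$ supplying the missing control on $\partial_\nu u_1$.
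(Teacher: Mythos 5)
Your gluing idea is geometrically natural and genuinely different from the paper's approach (the paper proves Theorem~\ref{thm1.1} via a Bandle--Suzuki level-set argument on $\left(\Omega_0, e^{2u_1}|dx|^2\right)$ combined with the isoperimetric inequality $4\pi\mu(E)-\mu^2(E)\le s^2(\partial E)$; see Theorem~\ref{thm1.3}), but it has a genuine gap that I do not think can be closed: the singular curvature of the glued metric along $\partial\Omega$ is \emph{not} nonpositive in general.

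Here is the concrete problem. With your conventions, the singular curvature density along the seam is
\[
\omega \;=\; \kappa_{g_1}+\kappa_{g_2} \;=\; e^{-u_\partial}\bigl(2\kappa_0 + \partial_\nu u_1 + \partial_\nu u_2\bigr),
\]
and adding the two Gauss--Bonnet identities for the disks $(\Omega,g_1)$ and $(\Omega,g_2)$ gives the exact identity
\[
\int_{\partial\Omega}\omega \, ds \;=\; 4\pi - \int_\Omega K_1\,d\mu_1 - \int_\Omega K_2\,d\mu_2 .
\]
Since $K_i\le 1$, having $\int_{\partial\Omega}\omega\le 0$ already \emph{implies} $\mu_1(\Omega)+\mu_2(\Omega)\ge 4\pi$; but the hypotheses of the theorem do not force $\int_{\partial\Omega}\omega\le 0$, and the pointwise bound $\omega\le 0$ fails. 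A clean counterexample: take $\Omega_0=\Omega=B_2\subset\mathbb{R}^2$, $u_1\equiv 0$ (so $K_1\equiv 0$, $\mu_1(\Omega_0)=4\pi$), and $u_2=\varepsilon\phi$ where $\phi>0$ is the first Dirichlet eigenfunction of $-\Delta$ on $B_2$ with eigenvalue $\lambda_1=j_0^2/4<2$. For small $\varepsilon>0$ one checks $\Delta u_2+e^{2u_2}\ge 1=\Delta u_1+e^{2u_1}$, $u_2>u_1$ in $\Omega$, and $u_2=u_1$ on $\partial\Omega$, so all hypotheses hold; the conclusion $\mu_1+\mu_2\ge 4\pi$ is trivially true here. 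Yet $\int_{\partial\Omega}\omega = 4\pi - 0 - \int_\Omega K_2\,d\mu_2 > 0$ (and by radial symmetry $\omega>0$ everywhere on $\partial\Omega$), so the glued surface is \emph{not} of Alexandrov curvature $\le 1$. Your hope that Bol's inequality on $(\Omega_0,g_1)$ supplies the missing control on $\partial_\nu u_1$ cannot work: Bol's inequality is a global isoperimetric statement and gives no pointwise control on the boundary geodesic curvature $\kappa_{g_1}=e^{-u_1}(\kappa_0+\partial_\nu u_1)$, which (as in the example) can be strictly positive on all of $\partial\Omega$.

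To summarize: the reduction to ``singular curvature $\le 0$'' is strictly stronger than the theorem, and is false. The correct use of the background metric $g_1$ is the one in the paper: write $u=u_2-u_1$, note $-\Delta_{g_1}u+1\le e^{2u}$ with $u>0$ in $\Omega$, $u=0$ on $\partial\Omega$, and run a level-set argument with the co-area formula, comparing $\alpha(t)=\int_{\{u>t\}}e^{2u}\,d\mu$ against $\beta(t)=\int_{\{u>t\}}d\mu$ via the isoperimetric inequality on $(\Omega_0,g_1)$. This sidesteps any pointwise boundary curvature claim and gives the stronger Theorem~\ref{thm1.3} with general $\lambda$. If you still want a cut-and-paste picture, the one that does produce a nonpositive singular seam (by Hopf) is replacing $g_1$ by $g_2$ \emph{inside} $\Omega$ on the ambient domain $\Omega_0$ (the seam density becomes $e^{-u_\partial}\partial_\nu(u_2-u_1)\le 0$), but then the resulting surface has boundary and total area controlled only by $\mu_1(\Omega_0\setminus\Omega)+\mu_2(\Omega)$, which is not the quantity in the conclusion; so that variant also does not close.
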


In this paper, we present an approach that completes, simplifies and improves the sphere covering inequality and several other inequalities about solutions of the elliptic equations with exponential nonlinearities. In particular,  we will prove the following generalization of the sphere covering inequality with a method different from the one in \cite{GM}. 

\begin{theorem}
\label{thm1.1variant}Let $\Omega _{0}\subset \mathbb{R}^{2}$ be a simply
connected domain. Assume $u_{1}\in C^{2}\left( \overline{\Omega _{0}}\right) 
$ such that%
\begin{equation}
\Delta u_{1}+e^{2u_{1}}\geq 0\text{ on }\Omega _{0},\quad \int_{\Omega
_{0}}e^{2u_{1}}dx\leq 4\pi .
\end{equation}%
Let $\Omega \subset \Omega _{0}$ be a bounded open set. Assume $u_{2}\in
C^{2}\left( \overline{\Omega }\right) $ and $0<\lambda \leq 1$ such that%
\begin{eqnarray*}
\Delta u_{2}+\lambda e^{2u_{2}} &\geq &\Delta u_{1}+e^{2u_{1}}\text{ in }%
\Omega \text{,} \\
u_{2} &>&u_{1}\text{ in }\Omega , \\
\left. u_{2}\right\vert _{\partial \Omega } &=&\left. u_{1}\right\vert
_{\partial \Omega }.
\end{eqnarray*}%
Then%
\begin{equation}
\int_{\Omega }e^{2u_{1}}dx+\int_{\Omega }e^{2u_{2}}dx\geq \frac{4\pi }{%
\lambda }.
\end{equation}
\end{theorem}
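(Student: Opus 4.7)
The plan is to exploit the comparison geometry viewpoint: the conformal metric $e^{2u_1}|dx|^2$ has Gauss curvature $\le 1$ on $\Omega_0$ (total area $\le 4\pi$), and after rescaling by $\tilde u_2:=u_2+\tfrac12\log\lambda$, the metric $e^{2\tilde u_2}|dx|^2$ on $\Omega$ satisfies $\Delta\tilde u_2+e^{2\tilde u_2}=\Delta u_2+\lambda e^{2u_2}\ge\Delta u_1+e^{2u_1}\ge0$, so it too has Gauss curvature $\le1$. I would first reduce to the case $\int_\Omega e^{2u_2}\,dx<4\pi/\lambda$ (otherwise the conclusion is immediate), which gives $\int_\Omega e^{2\tilde u_2}\,dx=\lambda\int_\Omega e^{2u_2}\,dx<4\pi$ and places $\tilde u_2$ in the regime where the Bol--Alexandrov isoperimetric inequality applies (possibly after restricting to a connected/simply connected component of $\Omega$).

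Next, I would set $w:=u_2-u_1>0$ in $\Omega$, $w=0$ on $\partial\Omega$, and for a.e.\ $t>0$ introduce $\Omega_t:=\{w>t\}$, $\Gamma_t:=\{w=t\}$, together with
\[
m(t):=\int_{\Omega_t}e^{2u_1}\,dx,\qquad n(t):=\int_{\Omega_t}e^{2u_2}\,dx,\qquad \ell(t):=\int_{\Gamma_t}|\nabla w|\,ds.
\]
The coarea formula gives $-m'(t)=\int_{\Gamma_t}e^{2u_1}/|\nabla w|\,ds$ (similarly for $n'$), while integrating the hypothesis $\Delta w\ge e^{2u_1}-\lambda e^{2u_2}$ over $\Omega_t$ via the divergence theorem yields $\ell(t)\le\lambda n(t)-m(t)$. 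Combining Cauchy--Schwarz on $\Gamma_t$ with the Bol inequality applied to $u_1$ on $\Omega_0$ and to $\tilde u_2$ on $\Omega$ (rescaled back), I obtain the coupled system
\[
-m'(t)\bigl(\lambda n(t)-m(t)\bigr)\ge m(t)\bigl(4\pi-m(t)\bigr),\quad -n'(t)\bigl(\lambda n(t)-m(t)\bigr)\ge n(t)\bigl(4\pi-\lambda n(t)\bigr).
\]
With the spherical-cap substitutions $m=2\pi(1-\cos\alpha)$ and $\lambda n=2\pi(1-\cos\beta)$ (so $\lambda n-m=2\pi(\cos\alpha-\cos\beta)$, $0\le\alpha\le\beta\le\pi$), this simplifies to $-\alpha'(\cos\alpha-\cos\beta)\ge\sin\alpha$ and $-\beta'(\cos\alpha-\cos\beta)\ge\sin\beta$. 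Multiplying by $\lambda\sin\alpha$ and $\sin\beta$ respectively and adding gives
\[
(\cos\alpha-\cos\beta)\,\frac{d}{dt}(\lambda\cos\alpha+\cos\beta)\ge\lambda\sin^2\alpha+\sin^2\beta.
\]

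Since $\alpha,\beta\to 0$ as $t\to T:=\max_\Omega w$, one has $\lambda\cos\alpha+\cos\beta\to\lambda+1$, and the desired conclusion $m(0)+n(0)\ge4\pi/\lambda$ is equivalent to $\lambda\cos\alpha(0)+\cos\beta(0)\le\lambda-1$, i.e.\ to a total increase of at least $2$ in the quantity $\lambda\cos\alpha+\cos\beta$ along the flow. The hard part will be the sharp closure of this ODE: extracting the constant $2$ (so the final bound is $4\pi/\lambda$ rather than a weaker constant such as $4\pi$) requires a phase-plane comparison against the extremal family of spherical-cap configurations, which in the equality case satisfies $\tan(\alpha/2)=k\tan(\beta/2)$ for a constant $k\in(0,1]$ and corresponds to a pair of concentric spherical caps on round spheres of curvatures $1$ and $\lambda$. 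A secondary technicality is ensuring that Bol's inequality for $\tilde u_2$ is available, which may require restricting the argument to a connected/simply connected subdomain of $\Omega$.
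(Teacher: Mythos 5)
Your comparison-geometry setup matches the paper's: conformal metric $g=e^{2u_1}|dx|^2$ with $K\le1$ and area $\le4\pi$, the difference $w=u_2-u_1$, level sets of $w$, coarea, divergence theorem, and Bol. But from there your route diverges and, as you yourself flag, it does not close.

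The concrete gap is the final step. You reduce to the two differential inequalities $-\alpha'(\cos\alpha-\cos\beta)\ge\sin\alpha$ and $-\beta'(\cos\alpha-\cos\beta)\ge\sin\beta$ and observe that $\lambda\cos\alpha+\cos\beta$ increases to $\lambda+1$. That monotonicity alone only gives $\lambda\cos\alpha(0)+\cos\beta(0)\le\lambda+1$ (total increase $\ge0$), whereas the conclusion requires total increase $\ge2$, i.e.\ $\lambda\cos\alpha(0)+\cos\beta(0)\le\lambda-1$. You acknowledge that extracting the extra $2$ needs a phase-plane comparison against an extremal spherical-cap family, but that comparison is not carried out, and it is not obvious how to carry it out from the two inequalities you wrote (which, being inequalities rather than an ODE system, do not by themselves force a minimal drop of $2$). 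This is the heart of the proof and it is missing.

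What you are also missing is the algebraic relation that makes the paper's proof close in one stroke. On the level set $\{w=t\}$ one has $e^{2u_2}=e^{2t}e^{2u_1}$ pointwise, so the coarea formulas immediately give $n'(t)=e^{2t}\,m'(t)$. You write down the coarea formulas but never use this identity. With it, a \emph{single} application of Bol (to the $u_1$-metric only) plus Cauchy--Schwarz and the identity $n'=e^{2t}m'$ shows that
\[
4\pi\bigl(n-e^{2t}m\bigr)-\lambda n^2+e^{2t}m^2
\]
is nondecreasing in $t$; evaluating at $t=0$ and $t\to\max w$ gives $4\pi n(0)-\lambda n(0)^2\le 4\pi m(0)-m(0)^2$, from which the claim follows by factoring when $0<\lambda\le1$. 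This entirely avoids the second application of Bol to $\tilde u_2$ (and with it the simple-connectedness worry about $\Omega$ you raise; the paper instead invokes the Chang--Chen--Lin extension of Bol, which handles non-simply-connected subdomains once the ambient area is $\le4\pi$). In short: your setup is right, but you use Bol twice and then get stuck on a phase-plane closure, whereas the key is to use Bol once and exploit $n'=e^{2t}m'$ to produce a monotone quantity directly.
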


We shall also prove the following inequality which can be viewed as the dual of the sphere covering
inequality.

\begin{theorem}
\label{thm1.2}Let $\Omega _{0}\subset \mathbb{R}^{2}$ be a simply connected
domain. Assume $u_{1}\in C^{2}\left( \overline{\Omega _{0}}\right) $ such
that%
\begin{equation}
\Delta u_{1}+e^{2u_{1}}\geq 0\text{ on }\Omega _{0},\quad \int_{\Omega
_{0}}e^{2u_{1}}dx\leq 4\pi .
\end{equation}%
Let $\Omega \subset \Omega _{0}$ be a bounded open set. Assume $u_{2}\in
C^{2}\left( \overline{\Omega }\right) $ such that%
\begin{eqnarray*}
\Delta u_{2}+e^{2u_{2}} &\leq &\Delta u_{1}+e^{2u_{1}}\text{ in }\Omega 
\text{,} \\
u_{2} &<&u_{1}\text{ in }\Omega , \\
\left. u_{2}\right\vert _{\partial \Omega } &=&\left. u_{1}\right\vert
_{\partial \Omega }.
\end{eqnarray*}%
Then%
\begin{equation}
\int_{\Omega }e^{2u_{1}}dx+\int_{\Omega }e^{2u_{2}}dx\geq 4\pi .
\end{equation}
\end{theorem}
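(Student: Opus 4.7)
The plan is to adapt the comparison-geometry argument of Theorem~\ref{thm1.1variant}, swapping the roles of $u_1$ and $u_2$ in the choice of auxiliary function. Set $w := u_1 - u_2$, so that $w>0$ in $\Omega$ and $w=0$ on $\partial\Omega$. The governing observation is that the metric $g_1 := e^{2u_1}|dx|^2$ on the simply connected domain $\Omega_0$ has Gauss curvature $K_{g_1} = -e^{-2u_1}\Delta u_1 \leq 1$ and total area at most $4\pi$, so Bol's isoperimetric inequality $L_{g_1}(\partial D)^2 \geq A_{g_1}(D)\bigl(4\pi - A_{g_1}(D)\bigr)$ applies to every simply connected subdomain $D\subset\Omega_0$.

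After reducing to $\Omega$ connected and simply connected (argue component-wise otherwise) and assuming $\int_\Omega e^{2u_2}\,dx < 4\pi$ (else the conclusion is immediate), I would analyze the level sets of $w$. Let $M := \max_{\overline\Omega}w$, and for $t\in(0,M)$ set $\Omega_t := \{w>t\}$, $a_i(t) := \int_{\Omega_t}e^{2u_i}\,dx$, and $L_1(t) := \int_{\{w=t\}}e^{u_1}\,d\mathcal H^1$. Since $u_1-u_2 = t$ on $\{w=t\}$, the coarea formula yields the key identity $a_1'(t) = e^{2t}\,a_2'(t)$. From $\Delta u_2 + e^{2u_2} \leq \Delta u_1 + e^{2u_1}$ one has $\Delta w \geq e^{2u_2} - e^{2u_1}$, and integration over $\Omega_t$ via the divergence theorem gives $\int_{\{w=t\}}|\nabla w|\,d\mathcal H^1 \leq a_1(t) - a_2(t)$. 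Combining this with Cauchy--Schwarz and the coarea formula yields the upper bound $L_1(t)^2 \leq (-a_1'(t))(a_1(t) - a_2(t))$. Pairing it with Bol's lower bound $L_1(t)^2 \geq a_1(t)(4\pi - a_1(t))$ on $\Omega_t$ (simply connected for a.e.\ $t$ by Sard's theorem) produces the fundamental differential inequality
\[
(-a_1'(t))\bigl(a_1(t) - a_2(t)\bigr) \;\geq\; a_1(t)\bigl(4\pi - a_1(t)\bigr) \qquad \text{for a.e.\ } t\in(0,M).
\]

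To finish, use $a_1' = e^{2t}a_2'$ to write $a_1(0) + a_2(0) = \int_0^M(-a_1'(t))(1+e^{-2t})\,dt$, insert the lower bound on $-a_1'$ coming from the above, and compare with the extremal family of standard bubbles $u_i(x) = \log\bigl(2\lambda_i/(1+\lambda_i^2|x|^2)\bigr)$ restricted to concentric disks, for which the displayed inequality holds with equality and $a_1(0) + a_2(0) = 4\pi$. I expect the main obstacle to be this final step: the pair consisting of the differential inequality and the identity $a_1'(t) = e^{2t}a_2'(t)$ together encode a spherical-cap comparison, but extracting the integrated bound $a_1(0) + a_2(0) \geq 4\pi$ requires a delicate ODE comparison or rearrangement argument guided by the spherical extremal. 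A secondary technical point is controlling the topology of $\Omega_t$ (simple connectedness, critical levels of $w$, multiple components), handled by Sard's theorem and summation over components.
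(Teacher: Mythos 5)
Your setup is correct and is, modulo a change of variables, the paper's own: you conformally rescale by $g_1 = e^{2u_1}|dx|^2$ so that $K_{g_1}\le 1$ with total area $\le 4\pi$, work with level sets of $w=u_1-u_2$, establish $a_1'(t)=e^{2t}a_2'(t)$, use the PDE plus the divergence theorem to get $\int_{\{w=t\}}|\nabla w|\,d\mathcal H^1 \le a_1(t)-a_2(t)$, and combine Cauchy--Schwarz with the isoperimetric lower bound to arrive at
\[
\bigl(-a_1'(t)\bigr)\bigl(a_1(t)-a_2(t)\bigr)\;\ge\; a_1(t)\bigl(4\pi - a_1(t)\bigr).
\]
This is exactly the paper's differential inequality (its $\alpha,\beta$ are your $a_2(-t),a_1(-t)$). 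The gap is in the final step, which you flag and for which your proposed strategy does not go through as stated: you cannot ``insert a lower bound on $-a_1'$'' into $\int_0^M(-a_1')(1+e^{-2t})\,dt$ because the lower bound $\tfrac{a_1(4\pi-a_1)}{a_1-a_2}$ depends on the unknown $a_2$ as well, and the appeal to the extremal bubble family is only a sanity check, not an argument.

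The missing move is algebraic, not an ODE comparison. Multiply the differential inequality by $2e^{-2t}$, substitute $a_2'=e^{-2t}a_1'$, and collect terms into a total derivative: you find that
\[
G(t):=a_2(t)^2 - e^{-2t}a_1(t)^2 + 4\pi e^{-2t}a_1(t) - 4\pi a_2(t)
\]
satisfies $G'(t)\ge 0$ on $(0,M)$. Since $a_1(M)=a_2(M)=0$ you get $G(0)\le G(M)=0$, that is,
\[
\bigl(a_2(0)-a_1(0)\bigr)\bigl(a_1(0)+a_2(0)-4\pi\bigr)\le 0,
\]
and since $u_2<u_1$ in $\Omega$ forces $a_2(0)<a_1(0)$, the second factor is $\ge 0$, giving $\int_\Omega e^{2u_1}\,dx + \int_\Omega e^{2u_2}\,dx\ge 4\pi$. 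This $G$ is precisely (the negative of) the quantity $4\pi(\alpha-e^{2t}\beta)+e^{2t}\beta^2-\alpha^2$ that the paper shows is monotone in the proof of Theorem~\ref{thm1.4}.

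A secondary caution on topology: arguing ``component-wise'' with Sard's theorem does not by itself justify the isoperimetric inequality on $\{w>t\}$, since these superlevel sets need not be simply connected. What you need (and what the paper uses) is the generalized inequality $s_{g_1}(\partial E)^2 \ge 4\pi\mu_{g_1}(E) - \mu_{g_1}(E)^2$ for arbitrary compact smooth domains $E\subset\Omega_0$, valid under $K_{g_1}\le 1$ and $\mu_{g_1}(\Omega_0)\le 4\pi$; this is \cite[Lemma 4.2]{CCL}, which removes the simple-connectedness restriction from Bol's theorem. With that in hand, Sard's theorem only needs to give you a.e.\ regularity of the level sets, and the rest goes through for any topology.
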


We will develop an approach for Theorem \ref{thm1.1variant} which is different from
the one in \cite{GM}, and shall modify it to prove Theorem \ref{thm1.2}.  Our
method has the general spirit of comparison geometry. Under the
assumption of Theorem \ref{thm1.1variant}, let $g=e^{2u_{1}}\left\vert
dx\right\vert ^{2}$, here $\left\vert dx\right\vert ^{2}$ is the Euclidean
metric. Then the Gauss curvature $K\leq 1$ and the area $\mu \left( \Omega
_{0}\right) \leq 4\pi $,  where $\mu (E) $ is the measure of $E$ associated with the metric $g$. 
It follows from \cite{B} and \cite[Lemma 4.2]{CCL}
that the following isoperimetric inequality holds on $\left( \Omega _{0},g\right) $ 
for any domain $E$ in $\Omega _{0}$ with smooth boundary 
\begin{equation}
4\pi \mu \left( E\right) -\mu^2\left( E\right)\leq s^{2} \left( \partial
E\right),
\end{equation}%
where $s$ is the $1$-dimensional measure associated with $g$. Using $g$ as a
background metric, we can rewrite the differential inequality between $u_{1}$
and $u_{2}$ into a differential inequality involving $u=u_{2}-u_{1}$.
Applying ideas from \cite{B,S} to the resulting differential inequality on $\left(
\Omega _{0},g\right) $ gives us an inequality which will imply Theorem \ref%
{thm1.1variant}. Indeed the proof of Theorem \ref{thm1.1variant} is based on the following more general result. 

\begin{theorem}
\label{thm1.3}Let $\left( M,g\right) $ be a simply connected smooth Riemann
surface. Assume $K\leq 1$ and $\mu \left( M\right) \leq 4\pi $, here $K$ is
the Gauss curvature and $\mu $ is the measure of $\left( M,g\right) $. Let $%
\Omega $ be a domain with compact closure and nonempty boundary, and $%
\lambda $ be a constant. If $u\in C^{2 }\left( \overline{\Omega }%
\right) $ such that $u>0$ in $\Omega $ and%
\begin{equation}
-\Delta _{g}u+1\leq \lambda e^{2u},\quad \left. u\right\vert _{\partial
\Omega }=0.
\end{equation}%
Then%
\begin{equation}
4\pi \int_{\Omega }e^{2u}d\mu -\lambda \left( \int_{\Omega }e^{2u}d\mu
\right) ^{2}\leq 4\pi \mu \left( \Omega \right) -\mu ^{2} \left( \Omega \right).
\end{equation}%
In particular if $0<\lambda \leq 1$, then%
\begin{equation}
\int_{\Omega }e^{2u}d\mu +\mu \left( \Omega \right) \geq \frac{4\pi }{%
\lambda }.  \label{eqmain}
\end{equation}
\end{theorem}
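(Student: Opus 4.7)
The plan is to work with the distribution functions $m(t) := \mu(\{u > t\})$ and $G(t) := \int_{\{u > t\}} e^{2u}\,d\mu$ on $t \in [0, \max_\Omega u]$, both vanishing at $t = \max u$ and satisfying $G'(t) = e^{2t} m'(t)$. Since $u = 0$ on $\partial\Omega$ and $u > 0$ inside, the boundary of $\{u > t\}$ is exactly $\{u = t\}$ for every $t > 0$, smooth for a.e.\ $t$ by Sard's theorem.

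The first step is to derive the key differential inequality
\[
m(t)\bigl(4\pi - m(t)\bigr) \leq -m'(t)\bigl(\lambda G(t) - m(t)\bigr) \qquad \text{for a.e.\ } t \in (0, \max u), \qquad (\ast)
\]
by chaining three standard ingredients. Denoting the $g$-length of $\{u = t\}$ by $L(t)$: (i) coarea and Cauchy--Schwarz give $L(t)^2 \leq -m'(t)\int_{\{u=t\}}|\nabla_g u|\,d\sigma$; (ii) the divergence theorem on $\{u > t\}$ combined with $-\Delta_g u \leq \lambda e^{2u} - 1$ gives $\int_{\{u=t\}}|\nabla_g u|\,d\sigma = -\int_{\{u>t\}}\Delta_g u\,d\mu \leq \lambda G(t) - m(t)$, the middle quantity being automatically $\geq 0$; and (iii) the Bol isoperimetric inequality quoted in the paper gives $m(t)(4\pi - m(t)) \leq L(t)^2$.

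Next I would feed $(\ast)$ into a Bandle-type rearrangement argument. Let $\phi := u^*$ be the decreasing rearrangement on $[0, M]$ with $M := \mu(\Omega)$, so that $\phi(M) = 0$ and $\tilde G(s) := G(\phi(s))$ satisfies $\tilde G'(s) = e^{2\phi(s)}$. Using $\phi'(s) = 1/m'(\phi(s))$, the inequality $(\ast)$ translates to
\[
-s(4\pi - s)\,\phi'(s) \leq \lambda \tilde G(s) - s.
\]
Introduce the defect $\Psi(s) := \tilde G(s)\bigl(4\pi - \lambda \tilde G(s)\bigr) - s(4\pi - s)$, which vanishes at $s = 0$; the theorem amounts to $\Psi(M) \leq 0$. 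A direct computation gives $\Psi'(s) = (4\pi - 2\lambda\tilde G(s))\,e^{2\phi(s)} - (4\pi - 2s)$, and the translated inequality then yields
\[
\Psi'(s) \leq (4\pi - 2s)\bigl(e^{2\phi(s)} - 1\bigr) + 2s(4\pi - s)\,\phi'(s)\,e^{2\phi(s)}.
\]
Integrating over $[0, M]$ and performing one integration by parts on the second term via $d(e^{2\phi(s)}) = 2\phi'(s) e^{2\phi(s)}\,ds$, the boundary term contributes exactly $M(4\pi - M)$ (using $\phi(M) = 0$ and $s(4\pi - s)|_{s=0} = 0$), and the resulting $-\int_0^M (4\pi - 2s)e^{2\phi(s)}\,ds$ cancels the $e^{2\phi}$ piece of the first integrand. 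The leftover $-\int_0^M (4\pi - 2s)\,ds + M(4\pi - M) = 0$, so $\int_0^M \Psi'(s)\,ds \leq 0$ and hence $\Psi(M) \leq 0$.

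The ``in particular'' claim follows by elementary algebra: trivially if $\lambda G(0) > 4\pi$, and otherwise the main inequality read as the quadratic $\lambda G^2 - 4\pi G + m(4\pi - m) \geq 0$ in $G$ has its smaller root at most $m$ when $\lambda \leq 1$, so the hypothesis $G > m$ (forced by $u > 0$) locates $G$ at or above the larger root $G_+$, and $G_+ + m \geq 4\pi/\lambda$ is a short check. The main obstacle is spotting the correct defect $\Psi$: the exact cancellation under integration by parts is sensitive both to the form of $\Psi$ and to the boundary value $\phi(M) = 0$, which is precisely where the Dirichlet condition $u|_{\partial\Omega} = 0$ enters the conclusion.
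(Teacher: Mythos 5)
Your proposal is correct, and the first half of it --- the derivation of the level-set differential inequality $m(t)(4\pi-m(t))\leq -m'(t)(\lambda G(t)-m(t))$ from the coarea formula, Cauchy--Schwarz on $\{u=t\}$, the divergence theorem applied to $-\Delta_g u+1\leq\lambda e^{2u}$, and Bol's isoperimetric inequality --- is the same engine the paper uses (the paper just carries along an extra factor of $e^{2t}$, which is cosmetic since $\alpha'=e^{2t}\beta'$). Where you genuinely diverge is in how you integrate this inequality. The paper stays in the variable $t$ and observes that the explicit quantity $4\pi(\alpha-e^{2t}\beta)-\lambda\alpha^2+e^{2t}\beta^2$ is \emph{monotone increasing} pointwise, then simply evaluates it at $t=0$ and $t\to\max u$. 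You instead pass to the decreasing-rearrangement variable $s=m(t)$, define a ``defect'' $\Psi(s)=\tilde G(s)(4\pi-\lambda\tilde G(s))-s(4\pi-s)$ (which is \emph{not} the paper's monotone quantity expressed in $s$; it omits a nonpositive correction $(4\pi s-s^2)(1-e^{2\phi})$), and show only that $\int_0^M\Psi'\leq 0$ by a delicate integration-by-parts cancellation in which the Dirichlet boundary value $\phi(M)=0$ supplies exactly the term $M(4\pi-M)$. Both strategies work; the paper's is a bit cleaner because it exhibits a pointwise monotone function in the original variable and never needs the rearrangement or the cancellation. For the ``in particular'' step, the paper's route is also more direct: from $4\pi G-\lambda G^2\leq 4\pi m-m^2\leq 4\pi m-\lambda m^2$ one gets $4\pi(G-m)\leq\lambda(G+m)(G-m)$ and divides by $G-m>0$, which avoids the root-location analysis of the quadratic. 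Finally, a small technical note: you invoke Sard's theorem but not the Morse-function and $\lambda\mapsto\lambda+\varepsilon$ approximation that the paper uses to justify absolute continuity of $m$ and $G$ and the a.e.\ manipulations; this should be made explicit if the argument is to be fully rigorous.
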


It is interesting that in this comparison theorem, what is compared is not
the area itself, but the quantity $4\pi \mu \left( \Omega \right) -\mu^{2}
\left( \Omega \right)$, which is exactly the quantity appeared in the
isoperimetric inequality.\\

The proof of Theorem \ref{thm1.2} also follows from the following more general result. 

\begin{theorem}
\label{thm1.4}Let $\left( M,g\right) $ be a simply connected smooth Riemann
surface. Assume $K\leq 1$ and $\mu \left( M\right) \leq 4\pi $, here $K$ is
the Gauss curvature and $\mu $ is the measure of $\left( M,g\right) $. Let $%
\Omega $ be a domain with compact closure and nonempty boundary, and $%
\lambda $ be a constant. If $u\in C^{2 }\left( \overline{\Omega }%
\right) $ such that $u<0$ in $\Omega $ and%
\begin{equation}
-\Delta _{g}u+1\geq \lambda e^{2u},\quad \left. u\right\vert _{\partial
\Omega }=0.
\end{equation}%
Then%
\begin{equation}
4\pi \int_{\Omega }e^{2u}d\mu -\lambda \left( \int_{\Omega }e^{2u}d\mu
\right) ^{2}\geq 4\pi \mu \left( \Omega \right) -\mu ^{2} \left( \Omega \right).
\end{equation}%
In particular if $\lambda =1$, then%
\begin{equation}
\int_{\Omega }e^{2u}d\mu +\mu \left( \Omega \right) \geq 4\pi .
\end{equation}
\end{theorem}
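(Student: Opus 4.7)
The plan is to adapt the proof of Theorem \ref{thm1.3} by replacing super-level sets of $u$ with sub-level sets, since now $u<0$ in $\Omega$. For $t\in[\min_{\overline\Omega} u,0]$ set $\Omega^t:=\{u<t\}$, $M(t):=\mu(\Omega^t)$ and $I(t):=\int_{\Omega^t}e^{2u}\,d\mu$. Both are nondecreasing, with $M(\min u)=I(\min u)=0$ and $M(0)=\mu(\Omega)$, $I(0)=\int_\Omega e^{2u}\,d\mu$. For almost every $t$ the level set $\{u=t\}$ is smooth, and the coarea formula gives $M'(t)=\int_{\{u=t\}}|\nabla_g u|^{-1}\,ds$ and $I'(t)=e^{2t}M'(t)$.

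First I would derive a differential inequality coupling $M$ and $I$. Along a regular level set, the outward conormal to $\Omega^t$ is $\nabla_g u/|\nabla_g u|$, so the divergence theorem together with the hypothesis $-\Delta_g u+1\ge\lambda e^{2u}$ yields
\[
\int_{\{u=t\}}|\nabla_g u|\,ds=\int_{\Omega^t}\Delta_g u\,d\mu\le M(t)-\lambda I(t).
\]
Cauchy--Schwarz on $\{u=t\}$ gives $s(\{u=t\})^2\le(M-\lambda I)M'(t)$, while Bol's isoperimetric inequality on $(M,g)$ (which applies because $(M,g)$ is simply connected with $K\le 1$ and $\mu(M)\le 4\pi$) gives $s(\{u=t\})^2\ge M(4\pi-M)$. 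Combining these yields the key ODI
\[
(M(t)-\lambda I(t))\,M'(t)\;\ge\;M(t)(4\pi-M(t)). \qquad (\ast)
\]

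Next I would integrate $(\ast)$ together with $I'(t)=e^{2t}M'(t)$ to obtain $4\pi I(0)-\lambda I(0)^2\ge 4\pi M(0)-M(0)^2$. Reparametrising by $M$, the ODI becomes $\frac{d}{dM}\ln(dI/dM)\le 2(M-\lambda I)/(M(4\pi-M))$, the reverse direction from the analogous ODI in the proof of Theorem \ref{thm1.3}. My plan is to follow the Bandle--Suzuki comparison scheme, matching the trajectory $(M,I)$ against the radial extremal on a spherical cap $B\subset S^2$ of area $\mu(\Omega)$ that saturates $(\ast)$. For $\lambda=1$ this extremal is the conformal factor of a M\"obius scaling $z\mapsto az$ (stereographic coordinates, $0<a<1$), giving $u^{\#}=\ln\!\big(a(1+|z|^{2})/(1+a^{2}|z|^{2})\big)$, which satisfies $-\Delta_{S^{2}}u^{\#}+1=e^{2u^{\#}}$ with equality; direct computation yields $M^{\#}=4\pi/(a+1)$, $I^{\#}=4\pi a/(a+1)$, and $M^{\#}+I^{\#}=4\pi$, showing that the extremal saturates the integrated inequality. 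The special case $\lambda=1$ then factors as $(M-I)(M+I-4\pi)\le 0$, and since $u<0$ in $\Omega$ forces $I<M$, this is equivalent to $\int_\Omega e^{2u}\,d\mu+\mu(\Omega)\ge 4\pi$.

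The main obstacle is the integration in the previous paragraph. The naive candidate $\Phi(t)=4\pi I-\lambda I^{2}-4\pi M+M^{2}$ is not monotone in $t$: its derivative factors as $M'\,[(2\pi-\lambda I)e^{2t}-(2\pi-M)]$, and the bracket is negative near $t=\min u$ (where $e^{2\min u}<1$). Hence a one-line integration by parts does not suffice, and a genuine comparison-geometry argument against the spherical extremal on $S^{2}$ is required to convert $(\ast)$ into the desired integrated inequality.
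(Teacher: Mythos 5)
Your derivation of the differential inequality $(\ast)$ matches the paper's argument step for step (sub-level sets, co-area formula, divergence theorem, Cauchy--Schwarz along level sets, Bol's isoperimetric inequality), so the first two-thirds of the proposal is sound. The gap is in the integration. You correctly observe that $\Phi(t)=4\pi I-\lambda I^2-4\pi M+M^2$ is not monotone, but you then conclude that no pointwise-in-$t$ integration works and that a comparison against the spherical extremal is required --- and you do not carry that comparison out, so the proof remains incomplete. In fact, a direct integration does work, using a different monotone quantity. The paper sets (in your notation $I=\alpha$, $M=\beta$)
\begin{equation*}
F(t)=4\pi\bigl(\alpha(t)-e^{2t}\beta(t)\bigr)+e^{2t}\beta(t)^2-\lambda\,\alpha(t)^2.
\end{equation*}
From the inequality $e^{2t}\bigl(4\pi\beta-\beta^2\bigr)\le e^{2t}\beta\beta'-\lambda\alpha\alpha'$, and using the pointwise identity $\alpha'=e^{2t}\beta'$, one has $(e^{2t})'\beta=(e^{2t}\beta)'-\alpha'$ and $e^{2t}(\beta^2)'=(e^{2t}\beta^2)'-(e^{2t})'\beta^2$, which turns the previous inequality into
\begin{equation*}
4\pi\bigl[(e^{2t}\beta)'-\alpha'\bigr]\le(e^{2t}\beta^2)'-\lambda(\alpha^2)',
\end{equation*}
i.e.\ $F'(t)\ge0$. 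Since $F\to 0$ as $t\downarrow\min_{\overline\Omega}u$ (where $\alpha,\beta\to0$) and $F(0)=4\pi\bigl(\alpha(0)-\beta(0)\bigr)+\beta(0)^2-\lambda\alpha(0)^2$ (because $e^{0}=1$), one gets exactly $4\pi\int_\Omega e^{2u}\,d\mu-\lambda\bigl(\int_\Omega e^{2u}\,d\mu\bigr)^2\ge4\pi\mu(\Omega)-\mu^2(\Omega)$. Note that $F$ and your $\Phi$ agree at $t=0$ but differ elsewhere by the extra $e^{2t}$ factors on the $\beta$-terms; those factors are precisely what make the quantity monotone. Your proposed fallback --- matching $(M,I)$ against a radial spherical-cap extremal in the style of Bandle--Suzuki --- is plausible in spirit but is only a sketch here: you do not prove that the extremal trajectory dominates, nor do you deal with the $\lambda\ne1$ case. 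So as written the proposal does not establish the theorem.
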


In Section 2, we present proofs of Theorems \ref{thm1.1variant}, \ref{thm1.2}, \ref{thm1.3}, and \ref{thm1.4}. In section 3, we will prove sphere covering inequalities on surfaces satisfying general isoperimetric inequalities and shall discuss their applications to elliptic equations with exponential nonlinearities. 

\section{Differential inequalities on surface with curvature at most $1$%
\label{sec2}}

In this section we will prove Theorem \ref{thm1.3} and \ref{thm1.4}. The
main point is  that the approach in \cite{B,S} can be performed on simply
connected surface with curvature at most $1$.

\begin{proof}[Proof of Theorem \protect\ref{thm1.3}]
By approximation and replacing $\lambda $ with $\lambda +\varepsilon $, $%
\varepsilon $ is a small positive number, we can assume $u$ is a Morse
function. For $t>0$, let%
\begin{equation*}
\alpha \left( t\right) =\int_{\{ u>t\}}e^{2u}d\mu ,\quad \beta \left( t\right)
=\int_{\{ u>t\}}d\mu .
\end{equation*}%
By co-area formula we get%
\begin{eqnarray*}
\alpha \left( t\right) &=&\int_{\{ u>t\}}e^{2u}\frac{\left\vert \nabla
u\right\vert }{\left\vert \nabla u\right\vert }d\mu \\
&=&\int_{t}^{\infty }d\tau \int_{\{ u=\tau \}}\frac{e^{2u}}{\left\vert \nabla
u\right\vert }ds  \notag \\
&=&\int_{t}^{\infty }\left( e^{2\tau }\int_{\{ u=\tau \}}\frac{ds}{\left\vert
\nabla u\right\vert }\right) d\tau  \notag
\end{eqnarray*}%
and%
\begin{equation*}
\beta \left( t\right) =\int_{\{ u>t\}}\frac{\left\vert \nabla u\right\vert }{%
\left\vert \nabla u\right\vert }d\mu =\int_{t}^{\infty }d\tau \int_{\{ u=\tau \}}%
\frac{ds}{\left\vert \nabla u\right\vert }.
\end{equation*}%
It follows that%
\begin{equation*}
\alpha ^{\prime }\left( t\right) =-e^{2t}\int_{\{ u=t\}}\frac{ds}{\left\vert
\nabla u\right\vert },
\end{equation*}%
and%
\begin{equation*}
\beta ^{\prime }\left( t\right) =-\int_{\{ u=t\}}\frac{ds}{\left\vert \nabla
u\right\vert }.
\end{equation*}%
In particular%
\begin{equation*}
\alpha ^{\prime }\left( t\right) =e^{2t}\beta ^{\prime }\left( t\right) .
\end{equation*}%
On the other hand, by the differential inequality we have%
\begin{equation*}
\int_{\{ u>t\}}\left( -\Delta u\right) d\mu +\beta \left( t\right) \leq \lambda
\alpha \left( t\right) ,
\end{equation*}%
hence%
\begin{equation*}
\int_{\{ u=t\}}\left\vert \nabla u\right\vert ds\leq \lambda \alpha -\beta .
\end{equation*}%
Multiplying both sides by $-\alpha'(t)$ we get
\begin{equation*}
e^{2t}\int_{\{ u=t\}}\frac{ds}{\left\vert \nabla u\right\vert }%
\int_{\{ u=t\}}\left\vert \nabla u\right\vert ds\leq -\lambda \alpha \alpha
^{\prime }+e^{2t}\beta \beta ^{\prime },
\end{equation*}%
which implies%
\begin{equation*}
e^{2t}s\left( \left\{ u=t\right\} \right) ^{2}\leq -\lambda \alpha \alpha
^{\prime }+e^{2t}\beta \beta ^{\prime }.
\end{equation*}%
Applying the isoperimetric inequality on $\left( M,g\right) $ (see \cite{B}
, \cite[Lemma 4.2]{CCL}) we get 
\begin{equation*}
e^{2t}\left( 4\pi \beta -\beta ^{2}\right) \leq -\lambda \alpha \alpha
^{\prime }+e^{2t}\beta \beta ^{\prime }.
\end{equation*}%
Consequently 
\begin{equation*}
4\pi \left( e^{2t}\right) ^{\prime }\beta -\left( e^{2t}\right) ^{\prime
}\beta ^{2}\leq -\lambda \left( \alpha ^{2}\right) ^{\prime }+e^{2t}\left(
\beta ^{2}\right) ^{\prime }.
\end{equation*}%
In other words 
\begin{equation*}
4\pi \left[ \left( e^{2t}\beta \right) ^{\prime }-\alpha ^{\prime }\right]
\leq -\lambda \left( \alpha ^{2}\right) ^{\prime }+\left( e^{2t}\beta
^{2}\right) ^{\prime },
\end{equation*}%
and hence%
\begin{equation*}
4\pi \left( \alpha -e^{2t}\beta \right) -\lambda \alpha ^{2}+e^{2t}\beta ^{2}%
\text{ is increasing.}
\end{equation*}%
Thus
\begin{equation*}
4\pi \left( \alpha \left( 0\right) -\beta \left( 0\right) \right) -\lambda
\alpha \left( 0\right) ^{2}+\beta \left( 0\right) ^{2}\leq 0.
\end{equation*}%
In other words 
\begin{equation*}
4\pi \int_{\Omega }e^{2u}d\mu -\lambda \left( \int_{\Omega }e^{2u}d\mu
\right) ^{2}\leq 4\pi \mu \left( \Omega \right) -\mu ^{2} \left( \Omega \right).
\end{equation*}
When $0<\lambda \leq 1$, we have%
\begin{equation*}
4\pi \int_{\Omega }e^{2u}d\mu -\lambda \left( \int_{\Omega }e^{2u}d\mu
\right) ^{2}\leq 4\pi \mu \left( \Omega \right) -\lambda \mu ^{2} \left( \Omega
\right),
\end{equation*}%
and%
\begin{eqnarray*}
&&4\pi \left( \int_{\Omega }e^{2u}d\mu -\mu \left( \Omega \right) \right) \\
&\leq &\lambda \left( \int_{\Omega }e^{2u}d\mu +\mu \left( \Omega \right)
\right) \left( \int_{\Omega }e^{2u}d\mu -\mu \left( \Omega \right) \right) .
\notag
\end{eqnarray*}%
Hence%
\begin{equation*}
\int_{\Omega }e^{2u}d\mu +\mu \left( \Omega \right) \geq \frac{4\pi }{%
\lambda }.
\end{equation*}
\end{proof}

We will derive Theorem \ref{thm1.4} by flipping all the inequalities.

\begin{proof}[Proof of Theorem \protect\ref{thm1.4}]
Again we can assume $u$ is a Morse function. For $t<0$, let%
\begin{equation*}
\alpha \left( t\right) =\int_{\{ u<t\}}e^{2u}d\mu ,\quad \beta \left( t\right)
=\int_{\{ u<t\}}d\mu .
\end{equation*}%
Then%
\begin{equation*}
\alpha ^{\prime }\left( t\right) =e^{2t}\int_{\{ u=t\}}\frac{ds}{\left\vert
\nabla u\right\vert },\quad \beta ^{\prime }\left( t\right) =\int_{\{ u=t\}}\frac{%
ds}{\left\vert \nabla u\right\vert },\quad \alpha ^{\prime }\left( t\right)
=e^{2t}\beta ^{\prime }\left( t\right) .
\end{equation*}%
On the other hand, since%
\begin{equation*}
\Delta u-1\leq -\lambda e^{2u},
\end{equation*}%
integrating on $\left\{ u<t\right\} $ we get%
\begin{equation*}
\int_{\{ u=t\}}\left\vert \nabla u\right\vert ds-\beta \left( t\right) \leq
-\lambda \alpha \left( t\right) .
\end{equation*}%
We have%
\begin{eqnarray*}
e^{2t}\left( 4\pi \beta -\beta ^{2}\right) &\leq &e^{2t}s^{2} \left( \left\{
u=t\right\} \right) \\
&\leq &e^{2t}\int_{\{ u=t\}}\frac{ds}{\left\vert \nabla u\right\vert }%
\int_{\{ u=t\}}\left\vert \nabla u\right\vert ds  \notag \\
&\leq &e^{2t}\beta \beta ^{\prime }-\lambda \alpha \alpha ^{\prime }.  \notag
\end{eqnarray*}%
Hence%
\begin{equation*}
e^{2t}\left( \beta ^{2}\right) ^{\prime }-\lambda \left( \alpha ^{2}\right)
^{\prime }\geq 4\pi \left( e^{2t}\right) ^{\prime }\beta -\left(
e^{2t}\right) ^{\prime }\beta ^{2}.
\end{equation*}%
It follows that%
\begin{equation*}
\left( e^{2t}\beta ^{2}\right) ^{\prime }-\lambda \left( \alpha ^{2}\right)
^{\prime }\geq 4\pi \left[ \left( e^{2t}\beta \right) ^{\prime }-\alpha
^{\prime }\right] ,
\end{equation*}%
and%
\begin{equation*}
4\pi \left( \alpha -e^{2t}\beta \right) +e^{2t}\beta ^{2}-\lambda \alpha ^{2}%
\text{ is increasing.}
\end{equation*}%
In particular%
\begin{equation*}
4\pi \left( \alpha \left( 0\right) -\beta \left( 0\right) \right) +\beta
\left( 0\right) ^{2}-\lambda \alpha \left( 0\right) ^{2}\geq 0.
\end{equation*}%
In other words 
\begin{equation*}
4\pi \int_{\Omega }e^{2u}d\mu -\lambda \left( \int_{\Omega }e^{2u}d\mu
\right) ^{2}\geq 4\pi \mu \left( \Omega \right) -\mu ^{2} \left( \Omega \right).
\end{equation*}%
When $\lambda =1$, we have%
\begin{eqnarray*}
&&4\pi \left( \mu \left( \Omega \right) -\int_{\Omega }e^{2u}d\mu \right) \\
&\leq &\left( \mu \left( \Omega \right) +\int_{\Omega }e^{2u}d\mu \right)
\left( \mu \left( \Omega \right) -\int_{\Omega }e^{2u}d\mu \right) .  \notag
\end{eqnarray*}%
Hence%
\begin{equation*}
\int_{\Omega }e^{2u}d\mu +\mu \left( \Omega \right) \geq 4\pi .
\end{equation*}
\end{proof}

Theorem \ref{thm1.1variant} easily  follows from Theorem \ref{thm1.3}. 

\begin{proof}[Proof of Theorem \protect\ref{thm1.1variant}]
Let $g=e^{2u_{1}}\left\vert dx\right\vert ^{2}$, then%
\begin{equation*}
K=-e^{-2u_{1}}\Delta u_{1}\leq 1
\end{equation*}%
and $\mu \left( \Omega _{0}\right) \leq 4\pi $. Let $u=u_{2}-u_{1}$. We have%
\begin{equation*}
-\Delta u\leq e^{2u_{1}}\left( \lambda e^{2u}-1\right) .
\end{equation*}%
Hence%
\begin{equation*}
-\Delta _{g}u\leq \lambda e^{2u}-1.
\end{equation*}%
Note that $u>0$ in $\Omega $ and $\left. u\right\vert _{\partial \Omega }=0$%
. Thus by Theorem \ref{thm1.3} we have
\begin{equation*}
\int_{\Omega }e^{2u}d\mu +\mu \left( \Omega \right) \geq \frac{4\pi }{%
\lambda }.
\end{equation*}%
In other words 
\begin{equation*}
\int_{\Omega }e^{2u_{1}}dx+\int_{\Omega }e^{2u_{2}}dx\geq \frac{4\pi }{%
\lambda }.
\end{equation*}
\end{proof}

By exactly the same argument as above, Theorem \ref{thm1.2} follows from
Theorem \ref{thm1.4}.

\begin{example}
Fix $0<r<1$. We take the stereographic projection of the unit sphere $S^{2}$
with respect the north pole to plane%
\begin{equation*}
x_{3}=-\sqrt{1-r^{2}}=-h_{1},
\end{equation*}%
then the standard metric on $S^{2}$ is written as%
\begin{equation*}
g_{1}=\frac{4\left( 1+h_{1}\right) ^{2}}{\left( \left\vert x\right\vert
^{2}+\left( 1+h_{1}\right) ^{2}\right) ^{2}}\left\vert dx\right\vert
^{2}=e^{2u_{1}}\left\vert dx\right\vert ^{2}.
\end{equation*}%
For $R>1$, we do stereographic projection of $R\cdot S^{2}$ with respect to
the north pole to the plane 
\begin{equation*}
x_{3}=\sqrt{R^{2}-r^{2}}=h_{2},
\end{equation*}%
then the metric on $R\cdot S^{2}$%
\begin{equation*}
g_{2}=\frac{4R^{2}\left( R-h_{2}\right) ^{2}}{\left( \left\vert x\right\vert
^{2}+\left( R-h_{2}\right) ^{2}\right) ^{2}}\left\vert dx\right\vert
^{2}=e^{2u_{2}}\left\vert dx\right\vert ^{2}.
\end{equation*}%
Note that for $\left\vert x\right\vert <r$, $u_{2}\left( x\right)
>u_{1}\left( x\right) $,%
\begin{eqnarray*}
\Delta u_{1}+e^{2u_{1}} &=&0, \\
\Delta u_{2}+R^{-2} e^{2u_{2}} &=&0, \\
\int_{B_{r}}e^{2u_{1}}dx &=&2\pi \left( 1-h_{1}\right) , \\
\int_{B_{r}}e^{2u_{2}}dx &=&2\pi R\left( R+h_{2}\right).
\end{eqnarray*}%
We have
\[\int_{B_r}e^{2u_1}dx+\int_{B_r}e^{2u_2}dx>4\pi R^2.\]
This is an example for Theorem \ref{thm1.1variant} with $\lambda =R^{-2}$.
\end{example}

\begin{example}
For $0<r<R<1$, we take the stereographic projection of $S^{2}$ with respect
to the north pole to the plane%
\begin{equation*}
x_{3}=\sqrt{1-r^{2}}=h_{1},
\end{equation*}%
then the metric on $S^{2}$ is written as%
\begin{equation*}
g_{1}=\frac{4\left( 1-h_{1}\right) ^{2}}{\left( \left\vert x\right\vert
^{2}+\left( 1-h_{1}\right) ^{2}\right) ^{2}}\left\vert dx\right\vert
^{2}=e^{2u_{1}}\left\vert dx\right\vert ^{2}.
\end{equation*}%
We also do stereographic projection of $R\cdot S^{2}$ with respect to the
north pole to the plane 
\begin{equation*}
x_{3}=-\sqrt{R^{2}-r^{2}}=-h_{2},
\end{equation*}%
then the metric on $R\cdot S^{2}$ is written as%
\begin{equation*}
g_{2}=\frac{4R^{2}\left( R+h_{2}\right) ^{2}}{\left( \left\vert x\right\vert
^{2}+\left( R+h_{2}\right) ^{2}\right) ^{2}}\left\vert dx\right\vert
^{2}=e^{2u_{2}}\left\vert dx\right\vert ^{2}.
\end{equation*}%
Note that for $\left\vert x\right\vert <r$, $u_{2}\left( x\right)
<u_{1}\left( x\right) $,%
\begin{eqnarray*}
\Delta u_{1}+e^{2u_{1}} &=&0, \\
\Delta u_{2}+e^{2u_{2}} &=&-\left( R^{-2}-1\right) e^{2u_{2}}, \\
\int_{B_{r}}e^{2u_{1}}dx &=&2\pi \left( 1+h_{1}\right) , \\
\int_{B_{r}}e^{2u_{2}}dx &=&2\pi R\left( R-h_{2}\right) >2\pi \left(
1-h_{1}\right) .
\end{eqnarray*}%
Hence
\[\int_{B_r}e^{2u_1}dx+\int_{B_r}e^{2u_2}dx>4\pi.\]
This is an example of Theorem \ref{thm1.2}.
\end{example}

\begin{example}
Let $\Omega \subset \mathbb{R}^{2}$ be a bounded smooth domain, $u$ be a
smooth function on $\overline{\Omega }$ such that%
\begin{eqnarray*}
-\Delta u+1 &\leq &e^{2u}\text{ in }\Omega , \\
\left. u\right\vert _{\partial \Omega } &=&0, \\
u &>&0\text{ in }\Omega.
\end{eqnarray*}%
It follows from Theorem \ref{thm1.3} that%
\begin{equation}
\int_{\Omega }e^{2u}dx+\left\vert \Omega \right\vert \geq 4\pi .
\end{equation}%
Because of the usual isoperimetric inequality on $\mathbb{R}^{2}$, the
assumption $\mu \left( M\right) \leq 4\pi $ in Theorem \ref{thm1.3} is not
needed in our situation. Here we will give an example where $\int_{\Omega
}e^{2u}dx+\left\vert \Omega \right\vert $ is arbitrary close to $4\pi $.

For $0<h<1$, denote $r=\sqrt{1-h^{2}}$. Take the stereographic projection of 
$S^{2}$ with respect to the north pole to the plane $x_{3}=-h$, then the
metric on $S^{2}$ is written as%
\begin{equation*}
g_{1}=\frac{4\left( 1+h\right) ^{2}}{\left[ \left\vert x\right\vert
^{2}+\left( 1+h\right) ^{2}\right] ^{2}}\left\vert dx\right\vert
^{2}=e^{2u_{1}}\left\vert dx\right\vert ^{2}.
\end{equation*}%
We have%
\begin{eqnarray*}
-\Delta u_{1} &=&e^{2u_{1}}\text{ in }B_{r}; \\
\left. u_{1}\right\vert _{\partial B_{r}} &=&0; \\
1 &\leq &e^{2u_{1}}\leq \frac{4}{\left( 1+h\right) ^{2}}.
\end{eqnarray*}%
Let%
\begin{equation*}
R=\frac{2}{1+h}
\end{equation*}%
and%
\begin{equation*}
h_{2}=\sqrt{R^{2}-r^{2}}.
\end{equation*}%
Take the stereographic projection of $R\cdot S^{2}$ with respect to the
north pole to the plane $x_{3}=h_{2}$, then the metric on $R\cdot S^{2}$ is
written as%
\begin{equation*}
g_{2}=\frac{4R^{2}\left( R-h_{2}\right) ^{2}}{\left[ \left\vert x\right\vert
^{2}+\left( R-h_{2}\right) ^{2}\right] ^{2}}\left\vert dx\right\vert
^{2}=e^{2u_{2}}\left\vert dx\right\vert ^{2}.
\end{equation*}%
We have%
\begin{eqnarray*}
-\Delta u_{2} &=&\frac{\left( 1+h\right) ^{2}}{4}e^{2u_{2}}\text{ in }B_{r};
\\
\left. u_{2}\right\vert _{\partial B_{r}} &=&0; \\
u_{2} &>&u_{1}\text{ in }B_{r}.
\end{eqnarray*}%
Let $u=u_{2}-u_{1}$, then $u>0$ in $B_{r}$, $\left. u\right\vert _{\partial
B_{r}}=0$. Moreover%
\begin{eqnarray*}
-\Delta u &=&\frac{\left( 1+h\right) ^{2}}{4}e^{2u_{2}}-e^{2u_{1}} \\
&=&\frac{\left( 1+h\right) ^{2}}{4}e^{2u_{1}}e^{2u}-e^{2u_{1}}.
\end{eqnarray*}%
It follows that%
\begin{eqnarray*}
-\Delta u+1 &\leq &-\Delta u+e^{2u_{1}} \\
&=&\frac{\left( 1+h\right) ^{2}}{4}e^{2u_{1}}e^{2u} \\
&\leq &e^{2u}.
\end{eqnarray*}%
On the other hand,%
\begin{equation*}
\int_{B_{r}}e^{2u}dx+\left\vert B_{r}\right\vert
=\int_{B_{r}}e^{2u_{2}-2u_{1}}dx+\left\vert B_{r}\right\vert \rightarrow 4\pi
\end{equation*}%
as $h\uparrow 1^{-}$. 

The above example shows that one can not get any improvements to  (\ref{eqmain}) by assuming $K\leq a<1$. Indeed $a=0$ in the example above. 
\end{example}

\section{Differential equation on surface satisfying general isoperimetric
inequalities\label{sec3}}

In this section we present sphere covering type inequalities on surfaces satisfying general isoperimetric inequalities (see (\ref{eqisop}) below) and  
discuss their applications to elliptic equations with exponential
nonlinearities. We find the following definition particularly useful. 

\begin{definition}
\label{def3.1}Let $M$ be a smooth surface and $g$ be a metric on $M$. If for
some $0<\theta \leq 1$ and $\kappa \in \mathbb{R}$, we have 
\begin{equation}
4\pi \theta \mu \left( E\right) -\kappa \mu^2\left( E\right)\leq s^{2} \left(
\partial E\right)  \label{eqisop}
\end{equation}%
for any compact smooth domain $E\subset M$, here $s$ is the one dimensional
measure associated with $g$ and $\mu $ is the two dimensional measure, then
we say $\left( M,g\right) $ satisfies the $\left( \theta ,\kappa \right) $%
-isoperimetric inequality.
\end{definition}

\begin{theorem}
\label{thm3.1} Let $\left( M,g\right) $ be a smooth Riemann surface
satisfying the $\left( \theta ,\kappa \right) $-isoperimetric inequality for
some $\theta \in \left( 0,1\right] $ and $\kappa \in \mathbb{R}$. If $\Omega
\subset M$ is an open domain with compact closure and 
$u\in C^{\infty }\left( \overline{\Omega }%
\right) $ such that 
\begin{equation}
-\Delta_g u+\kappa =\lambda e^{2u}+f,\quad \left. u\right\vert _{\partial
\Omega }=0,\quad u>0\text{ in }\Omega .
\end{equation}%
Denote%
\begin{equation}
\Theta =\frac{1}{2\pi }\int_{\Omega }f^{+}d\mu ,
\end{equation}%
then%
\begin{equation}
4\pi \left( \theta -\Theta \right) \int_{\Omega }e^{2u}d\mu -\lambda \left(
\int_{\Omega }e^{2u}d\mu \right) ^{2}\leq 4\pi \theta \mu \left( \Omega
\right) -\kappa \mu ^{2}\left( \Omega \right).
\end{equation}%
In particular, if $\Theta =0$ and $0<\lambda \leq \kappa $, then%
\begin{equation}
\int_{\Omega }e^{2u}d\mu +\mu \left( \Omega \right) \geq \frac{4\pi \theta }{%
\lambda }.
\end{equation}
\end{theorem}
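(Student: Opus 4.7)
The plan is to adapt the coarea-formula argument that established Theorem \ref{thm1.3}, modifying each step to accommodate the source term $f$, the constant $\kappa$ in place of $1$, and the more general $(\theta,\kappa)$-isoperimetric inequality. Exactly as in the proof of Theorem \ref{thm1.3}, by approximation I would first reduce to the case that $u$ is a Morse function, then for $t>0$ define
\[
\alpha(t) = \int_{\{u>t\}} e^{2u}\,d\mu, \qquad \beta(t) = \int_{\{u>t\}}\,d\mu,
\]
obtaining $\alpha'(t) = -e^{2t}\int_{\{u=t\}}ds/|\nabla u|$, $\beta'(t) = -\int_{\{u=t\}}ds/|\nabla u|$, and the crucial identity $\alpha' = e^{2t}\beta'$.

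Next, I integrate the equation $-\Delta_g u + \kappa = \lambda e^{2u} + f$ over $\{u>t\}$ and apply the divergence theorem (the outward normal derivative on $\{u=t\}$ is $-|\nabla u|$), which yields
\[
\int_{\{u=t\}}|\nabla u|\,ds \;=\; \lambda\alpha(t) - \kappa\beta(t) + \int_{\{u>t\}} f\,d\mu \;\leq\; \lambda\alpha(t) - \kappa\beta(t) + 2\pi\Theta.
\]
Combining the Cauchy--Schwarz inequality $s^2(\{u=t\}) \leq \int_{\{u=t\}}(ds/|\nabla u|)\cdot\int_{\{u=t\}}|\nabla u|\,ds$ with the assumed $(\theta,\kappa)$-isoperimetric inequality $4\pi\theta\beta - \kappa\beta^2 \leq s^2(\{u=t\})$ applied to the compact smooth superlevel set (for regular values of $t$), multiplying by the nonnegative quantity $-\alpha' = e^{2t}\int ds/|\nabla u|$, and using $\alpha' = e^{2t}\beta'$, I arrive at
\[
e^{2t}\bigl(4\pi\theta\beta - \kappa\beta^2\bigr) \;\leq\; -\lambda\alpha\alpha' + \kappa e^{2t}\beta\beta' - 2\pi\Theta\alpha'.
\]

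The argument then closes as in Theorem \ref{thm1.3}: after multiplying by $2$, using $(e^{2t})' = 2e^{2t}$, $(\alpha^2)' = 2\alpha\alpha'$, $(\beta^2)' = 2\beta\beta'$, and rewriting $(e^{2t})'\beta = (e^{2t}\beta)' - \alpha'$ and $(e^{2t})'\beta^2 = (e^{2t}\beta^2)' - e^{2t}(\beta^2)'$, the $\kappa e^{2t}(\beta^2)'$ contributions on the two sides cancel, and the inequality recasts as
\[
\bigl[-\lambda\alpha^2 + 4\pi(\theta-\Theta)\alpha - 4\pi\theta\,e^{2t}\beta + \kappa e^{2t}\beta^2\bigr]' \;\geq\; 0.
\]
Since $\alpha(t),\beta(t) \to 0$ as $t \to \sup_\Omega u$, the bracketed quantity tends to $0$ at infinity; being increasing, it must be $\leq 0$ at $t = 0$, and noting $\alpha(0) = \int_\Omega e^{2u}\,d\mu$, $\beta(0) = \mu(\Omega)$, this is precisely the first conclusion. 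For the particular case $\Theta = 0$, $0 < \lambda \leq \kappa$, I would weaken the right-hand side by replacing $\kappa$ with $\lambda$, factor the resulting difference of squares, and divide by the positive quantity $\int_\Omega e^{2u}\,d\mu - \mu(\Omega)$ (positive because $u > 0$) to extract the stated lower bound $4\pi\theta/\lambda$.

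The principal technical obstacle is arranging the product-rule bookkeeping so that the $f$-contribution collects cleanly as the single term $4\pi(\theta-\Theta)\alpha$; this hinges on the choice to bound $\int_{\{u>t\}} f\,d\mu$ by the $t$-independent quantity $2\pi\Theta$, whose derivative vanishes so that only $-4\pi\Theta\alpha'$ appears in the monotone quantity. The rest is a routine adaptation of the Bol--Bandle--type argument already carried out for Theorem \ref{thm1.3}; in particular, the Morse-function approximation is handled exactly as there, and is compatible with $\Theta$ since the latter depends only on $\int_\Omega f^+\,d\mu$.
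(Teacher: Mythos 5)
Your proposal reproduces the paper's proof essentially step for step: the same coarea--Morse-function setup, the same integration of the equation over superlevel sets to bound $\int_{\{u=t\}}|\nabla u|\,ds$, the same Cauchy--Schwarz plus $(\theta,\kappa)$-isoperimetric inequality, and the same product-rule rearrangement (with the $\kappa e^{2t}(\beta^2)'$ cancellation) leading to the monotone quantity, followed by the same weakening $\kappa\mapsto\lambda$ and factorization for the particular case. There is no meaningful divergence from the paper's argument.
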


\begin{remark}
\label{rmk3.1}It is worth pointing out that as long as the $\left( \theta
,\kappa \right) $-isoperimetric inequality is valid, the smoothness of $\ u$
and metric $g$ is not essential to our argument. In particular $f$ can be
replaced by a signed measure. This is useful in some singular Liouville type
equations. We will not elaborate this point further but refer the reader to 
\cite{BC1,BC2,BGJM} and the references therein.
\end{remark}

\begin{proof}
By approximation we can assume $u$ is a Morse function. For $t>0$, let%
\begin{equation*}
\alpha \left( t\right) =\int_{\{ u>t\}}e^{2u}d\mu ,\quad \beta \left( t\right)
=\int_{\{ u>t\}}d\mu .
\end{equation*}%
As in the proof of Theorem \ref{thm1.3}, we have%
\begin{eqnarray*}
\alpha ^{\prime }\left( t\right) &=&-e^{2t}\int_{\{ u=t\}}\frac{ds}{\left\vert
\nabla u\right\vert }, \\
\beta ^{\prime }\left( t\right) &=&-\int_{\{ u=t\}}\frac{ds}{\left\vert \nabla
u\right\vert }, \\
\alpha ^{\prime }\left( t\right) &=&e^{2t}\beta ^{\prime }\left( t\right) .
\end{eqnarray*}%
On the other hand,%
\begin{equation*}
\int_{\{ u>t\}}\left( -\Delta u\right) d\mu +\kappa \beta \left( t\right)
=\lambda \alpha \left( t\right) +\int_{\{ u>t\}}fd\mu ,
\end{equation*}%
hence%
\begin{equation*}
\int_{\{ u=t\}}\left\vert \nabla u\right\vert ds\leq \lambda \alpha -\kappa \beta
+2\pi \Theta .
\end{equation*}%
We have%
\begin{equation*}
e^{2t}\int_{\{ u=t\}}\frac{ds}{\left\vert \nabla u\right\vert }%
\int_{\{ u=t\}}\left\vert \nabla u\right\vert ds\leq -\lambda \alpha \alpha
^{\prime }+\kappa e^{2t}\beta \beta ^{\prime }-2\pi \Theta \alpha ^{\prime },
\end{equation*}%
which implies%
\begin{equation*}
e^{2t}s^{2} \left( \left\{ u=t\right\} \right) \leq -\lambda \alpha \alpha
^{\prime }+\kappa e^{2t}\beta \beta ^{\prime }-2\pi \Theta \alpha ^{\prime }.
\end{equation*}%
Using (\ref{eqisop}) we get 
\begin{equation*}
e^{2t}\left( 4\pi \theta \beta -\kappa \beta ^{2}\right) \leq -\lambda
\alpha \alpha ^{\prime }+\kappa e^{2t}\beta \beta ^{\prime }-2\pi \Theta
\alpha ^{\prime }.
\end{equation*}%
It follows that%
\begin{equation*}
4\pi \theta \left[ \left( e^{2t}\beta \right) ^{\prime }-\alpha ^{\prime }%
\right] \leq -\lambda \left( \alpha ^{2}\right) ^{\prime }+\kappa \left(
e^{2t}\beta ^{2}\right) ^{\prime }-4\pi \Theta \alpha ^{\prime }.
\end{equation*}%
Integrating for $t$ from $0$ to $\infty $, we get%
\begin{equation*}
4\pi \theta \left( \alpha \left( 0\right) -\beta \left( 0\right) \right)
\leq \lambda \alpha \left( 0\right) ^{2}-\kappa \beta \left( 0\right)
^{2}+4\pi \Theta \alpha \left( 0\right)
\end{equation*}%
In other words 
\begin{equation*}
4\pi \left( \theta -\Theta \right) \int_{\Omega }e^{2u}d\mu -\lambda \left(
\int_{\Omega }e^{2u}d\mu \right) ^{2}\leq 4\pi \theta \mu ^{2}\left( \Omega
\right) -\kappa \mu \left( \Omega \right).
\end{equation*}
\end{proof}

If we flip the inequalities as in the proof of Theorem \ref{thm1.4}, we get

\begin{theorem}
\label{thm3.2}Let $\left( M,g\right) $ be a smooth Riemann surface
satisfying the $\left( \theta ,\kappa \right) $-isoperimetric inequality for
some $\theta \in \left( 0,1\right] $ and $\kappa \in \mathbb{R}$. Assume $%
\Omega \subset M$ is an open domain with compact closure and 
$u\in C^{\infty }\left( \overline{%
\Omega }\right) $ such that 
\begin{equation}
-\Delta_g u+\kappa =\lambda e^{2u}+f,\quad \left. u\right\vert _{\partial
\Omega }=0,\quad u<0\text{ in }\Omega .
\end{equation}%
Denote%
\begin{equation}
\Theta =\frac{1}{2\pi }\int_{\Omega }f^{-}d\mu ,
\end{equation}%
then%
\begin{equation}
4\pi \theta \mu \left( \Omega \right) -\kappa \mu ^{2} \left( \Omega \right) \leq 4\pi \left( \theta +\Theta \right) \int_{\Omega }e^{2u}d\mu
-\lambda \left( \int_{\Omega }e^{2u}d\mu \right) ^{2}.
\end{equation}%
In particular, if $\Theta =0$ and $\lambda =\kappa >0$, then%
\begin{equation}
\int_{\Omega }e^{2u}d\mu +\mu \left( \Omega \right) \geq \frac{4\pi \theta }{%
\lambda }.
\end{equation}
\end{theorem}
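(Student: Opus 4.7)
The plan is to follow the scheme of the proof of Theorem \ref{thm3.1}, but on sublevel sets $\{u<t\}$ in place of superlevel sets, in the same way that the proof of Theorem \ref{thm1.4} was obtained by flipping the signs throughout the proof of Theorem \ref{thm1.3}. After reducing to the case of a Morse function by approximation, for $t<0$ I would introduce
\begin{equation*}
\alpha(t)=\int_{\{u<t\}}e^{2u}\,d\mu,\qquad \beta(t)=\int_{\{u<t\}}d\mu.
\end{equation*}
The co-area formula produces the same identities
\begin{equation*}
\alpha'(t)=e^{2t}\int_{\{u=t\}}\frac{ds}{|\nabla u|},\quad \beta'(t)=\int_{\{u=t\}}\frac{ds}{|\nabla u|},\quad \alpha'=e^{2t}\beta',
\end{equation*}
but now $\alpha'(t),\beta'(t)\geq 0$ since the sublevel sets grow with $t$.

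Next I would integrate the equation $-\Delta_{g}u+\kappa=\lambda e^{2u}+f$ over $\{u<t\}$. The outward unit normal to $\{u<t\}$ is $\nabla u/|\nabla u|$, so the divergence theorem gives $\int_{\{u<t\}}(-\Delta u)\,d\mu=-\int_{\{u=t\}}|\nabla u|\,ds$, whence
\begin{equation*}
\int_{\{u=t\}}|\nabla u|\,ds=\kappa\beta-\lambda\alpha-\int_{\{u<t\}}f\,d\mu\leq \kappa\beta-\lambda\alpha+2\pi\Theta,
\end{equation*}
the last bound using $-f\leq f^{-}$ together with the definition of $\Theta$. Combining Cauchy--Schwarz, $s^{2}(\{u=t\})\leq \beta'(t)\cdot\int_{\{u=t\}}|\nabla u|\,ds$, with the $(\theta,\kappa)$-isoperimetric inequality $4\pi\theta\beta-\kappa\beta^{2}\leq s^{2}(\{u=t\})$, then multiplying by $e^{2t}$ and using $e^{2t}\beta'=\alpha'$, yields the pointwise estimate
\begin{equation*}
e^{2t}(4\pi\theta\beta-\kappa\beta^{2})\leq \kappa e^{2t}\beta\beta'-\lambda\alpha\alpha'+2\pi\Theta\alpha'.
\end{equation*}

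The remaining work is algebraic packaging. Using $2e^{2t}\beta=(e^{2t}\beta)'-\alpha'$ and $(e^{2t}\beta^{2})'=2e^{2t}\beta^{2}+2e^{2t}\beta\beta'$, the displayed inequality is equivalent to the assertion that
\begin{equation*}
F(t):=4\pi\theta(e^{2t}\beta-\alpha)-\kappa e^{2t}\beta^{2}+\lambda\alpha^{2}-4\pi\Theta\alpha
\end{equation*}
is decreasing. Since $\alpha(t),\beta(t)\to 0$ as $t\downarrow \inf u$, while $\alpha(0)=\int_{\Omega}e^{2u}d\mu$ and $\beta(0)=\mu(\Omega)$, passing from $F(0)\leq 0$ to the claimed two-sided bound is a rearrangement. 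The special case $\Theta=0$, $\lambda=\kappa>0$ then follows by factoring that bound as $4\pi\theta[\mu(\Omega)-\int_{\Omega}e^{2u}d\mu]\leq \lambda[\mu(\Omega)-\int_{\Omega}e^{2u}d\mu][\mu(\Omega)+\int_{\Omega}e^{2u}d\mu]$ and cancelling the strictly positive factor $\mu(\Omega)-\int_{\Omega}e^{2u}d\mu$, which is positive precisely because $u<0$ in $\Omega$. The main bookkeeping pitfall---and the only real obstacle---is to track which inequalities flip under the passage to sublevel sets: the isoperimetric and Cauchy--Schwarz inequalities retain their direction, but the boundary term in the divergence theorem reverses sign, and this is exactly what makes $f^{-}$ (rather than $f^{+}$) the natural penalty in the dual statement.
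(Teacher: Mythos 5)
Your proof is correct and is precisely the argument the paper intends when it says ``If we flip the inequalities as in the proof of Theorem~\ref{thm1.4}, we get'' Theorem~\ref{thm3.2}: working on sublevel sets of $u$, the divergence theorem reverses the sign of the boundary flux (which is why $f^{-}$ replaces $f^{+}$), while Cauchy--Schwarz and the $(\theta,\kappa)$-isoperimetric inequality keep their direction, so the auxiliary function $4\pi\theta\bigl(e^{2t}\beta-\alpha\bigr)-\kappa e^{2t}\beta^{2}+\lambda\alpha^{2}-4\pi\Theta\alpha$ is monotone decreasing, and evaluating at $t=0$ against the vanishing limit as $t\downarrow\inf u$ gives the claim. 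The factorization and cancellation of $\mu(\Omega)-\int_{\Omega}e^{2u}\,d\mu>0$ (strict because $u<0$) for the special case is also exactly as in the paper's parallel deductions.
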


Next we discuss some known and new applications of Theorem \ref%
{thm3.1} and \ref{thm3.2}.

\begin{example}[\protect\cite{B,S}]
\label{ex3.1}Let $\left( M,\widetilde{g}\right) $ be a simply connected
smooth Riemann surface with curvature $\widetilde{K}\leq 1$. If $E$ is a
compact simply connected domain in $M$ with nonempty smooth boundary, then
we can find $u\in C^{\infty }\left( E\right) $ such that%
\begin{equation*}
-{\Delta }_{\widetilde g} u=\widetilde{K}\text{ on }E,\quad \left. u\right\vert
_{\partial E}=0.
\end{equation*}%
Let $g=e^{-2u}\widetilde{g}$, then the curvature of $g$ is zero. By Riemann
mapping theorem and the Taylor series argument for holomorphic functions in 
\cite{C}, $\left( E,g\right) $ satisfies the $\left( 1,0\right) $%
-isoperimetric inequality. On the other hand,%
\begin{equation*}
-\Delta _{g}u\leq e^{2u}\text{ on }E,\quad \left. u\right\vert _{\partial
E}=0.
\end{equation*}%
If we let%
\begin{equation*}
\Omega =\left\{ p\in E:u\left( p\right) >0\right\} ,
\end{equation*}%
then%
\begin{equation*}
-\Delta _{g}u\leq e^{2u}\text{ on }\Omega ,\quad \left. u\right\vert
_{\partial \Omega }=0,\quad u>0\text{ in }\Omega .
\end{equation*}%
Theorem \ref{thm3.1} tells us%
\begin{equation*}
4\pi \int_{\Omega }e^{2u}d\mu -\left( \int_{\Omega }e^{2u}d\mu \right)
^{2}\leq 4\pi \mu \left( \Omega \right) .
\end{equation*}%
Hence%
\begin{eqnarray*}
4\pi \left( \int_{E}e^{2u}d\mu -\mu \left( E\right) \right)  &\leq &4\pi
\left( \int_{\Omega }e^{2u}d\mu -\mu \left( \Omega \right) \right)  \\
&\leq &\left( \int_{\Omega }e^{2u}d\mu \right) ^{2} \\
&\leq &\left( \int_{E}e^{2u}d\mu \right) ^{2}
\end{eqnarray*}%
i.e.%
\begin{equation*}
4\pi \widetilde{\mu }\left( E\right) -\widetilde{\mu }^{2}\left( E\right)
\leq 4\pi \mu \left( E\right) \leq s ^{2}\left( \partial E\right)=%
\widetilde{s} ^{2}\left( \partial E\right).
\end{equation*}%
This is exactly the argument given in \cite{B,S}.

If we assume further that $\widetilde{\mu }\left( M\right) \leq 4\pi $,  then following \cite[Lemma 4.2]{CCL}  we know,   for $E$ to be a compact domain with  boundary  smooth but not necessarily simply connected, there still holds%
\begin{equation*}
4\pi \widetilde{\mu }\left( E\right) -\widetilde{\mu } ^{2}\left( E\right) \leq \widetilde{s} ^{2}\left( \partial E\right).
\end{equation*}%
In another word, $\left( 1,1\right) $-isoperimetric inequality is true for $%
\left( M,\widetilde{g}\right) $. As a consequence Theorem \ref{thm1.3}
follows from Theorem \ref{thm3.1}.
\end{example}

\begin{example}
\label{ex3.2} Let $\left( M,\widetilde{g}\right) $ be a simply connected
smooth Riemann surface with curvature $\tilde{K}$. Assume $a\geq 0$ and%
\begin{equation}
\Theta =\frac{1}{2\pi }\int_{M}\left( \widetilde{K}-a\right) ^{+}d\widetilde{%
\mu }<1.
\end{equation}%
Then for any compact simply connected domain $E$ in $M$ with nonempty smooth
boundary, we have%
\begin{equation}
4\pi \left( 1-\Theta \right) \widetilde{\mu }\left( E\right) -a\widetilde{\mu 
} ^{2}\left( E\right)\leq \widetilde{s} ^{2}\left( \partial E\right).
\end{equation}%
In particular, if we assume further that $\widetilde{\mu }\left( M\right) \leq
\frac{4\pi \left( 1-\Theta \right) }{a}$, then $\left( M,\widetilde{g}\right) $
satisfies $\left( 1-\Theta ,a\right) $-isoperimetric inequality. In fact
this is even true when $\widetilde{g}$ is singular, see \cite{BC1,BC2,BGJM}
and the references therein.

Indeed as in the previous example, we can find $u\in C^{\infty }\left(
E\right) $ such that%
\begin{equation*}
-\widetilde{\Delta }u=\widetilde{K}\text{ on }E,\quad \left. u\right\vert
_{\partial E}=0.
\end{equation*}%
Let $g=e^{-2u}\widetilde{g}$, then $\left( E,g\right) $ satisfies $\left(
1,0\right) $-isoperimetric inequality and%
\begin{equation*}
-\Delta _{g}u=ae^{2u}+\left( \widetilde{K}-a\right) e^{2u}\text{ on }E.
\end{equation*}%
Note that%
\begin{equation*}
\frac{1}{2\pi }\int_{E}\left[ \left( \widetilde{K}-a\right) e^{2u}\right]
^{+}d\mu =\frac{1}{2\pi }\int_{E}\left( \widetilde{K}-a\right) ^{+}d%
\widetilde{\mu }\leq \Theta <1.
\end{equation*}%
Let 
\begin{equation*}
\Omega =\left\{ p\in E:u\left( p\right) >0\right\} ,
\end{equation*}%
then%
\begin{equation*}
-\Delta _{g}u=ae^{2u}+\left( \widetilde{K}-a\right) e^{2u}\text{ on }\Omega
,\quad \left. u\right\vert _{\partial \Omega }=0,\quad u>0\text{ in }\Omega .
\end{equation*}%
Theorem \ref{thm3.1} implies%
\begin{equation*}
4\pi \left( 1-\Theta \right) \int_{\Omega }e^{2u}d\mu -a\left( \int_{\Omega
}e^{2u}d\mu \right) ^{2}\leq 4\pi \mu \left( \Omega \right) .
\end{equation*}%
Hence%
\begin{eqnarray*}
&&4\pi \left( 1-\Theta \right) \int_{E}e^{2u}d\mu -4\pi \mu \left( E\right)
\\
&\leq &4\pi \left( 1-\Theta \right) \int_{\Omega }e^{2u}d\mu -4\pi \mu
\left( \Omega \right) \\
&\leq &a\left( \int_{\Omega }e^{2u}d\mu \right) ^{2} \\
&\leq &a\left( \int_{E}e^{2u}d\mu \right) ^{2}.
\end{eqnarray*}%
In another word,%
\begin{equation*}
4\pi \left( 1-\Theta \right) \widetilde{\mu }\left( E\right) -a\widetilde{\mu 
} ^{2} \left( E\right) \leq 4\pi \mu \left( E\right) \leq s ^{2}\left( \partial
E\right) =\widetilde{s} ^{2}\left( \partial E\right) .
\end{equation*}
\end{example}

\begin{example}[\protect\cite{B,BC1,BC2,BGJM}]
\label{ex3.3}Let $\Omega _{0}\subset \mathbb{R}^{2}$ be a simply connected
domain and $u,h\in C^{\infty }\left( \Omega _{0}\right) $ with $h\left(
x\right) >0$ for any $x\in \Omega _{0}$. We write%
\begin{equation}
-\Delta u=he^{2u}+f
\end{equation}%
and%
\begin{equation}
\Theta =\frac{1}{2\pi }\int_{\Omega _{0}}\left( f-\frac{1}{2}\Delta \log
h\right) ^{+}dx.
\end{equation}%
If $\Theta <1$ and%
\begin{equation}
\int_{\Omega _{0}}he^{2u}dx\leq 4\pi \left( 1-\Theta \right) ,
\end{equation}%
then $\left( \Omega _{0},he^{2u}\left\vert dx\right\vert ^{2}\right) $
satisfies the $\left( 1-\Theta ,1\right) $-isoperimetric inequality. As
pointed out earlier in Remark \ref{rmk3.1}, the regularity assumption of $u$
and $h$ can be weakened and we refer the reader to \cite{BC1,BC2,BGJM}.
\end{example}

\begin{proof}
For convenience we denote $g=he^{2u}\left\vert dx\right\vert ^{2}$, then its
curvature%
\begin{eqnarray*}
K &=&h^{-1}e^{-2u}\left( -\Delta u-\frac{1}{2}\Delta \log h\right) \\
&=&h^{-1}e^{-2u}\left( he^{2u}+f-\frac{1}{2}\Delta \log h\right) \\
&=&1+h^{-1}e^{-2u}\left( f-\frac{1}{2}\Delta \log h\right) .
\end{eqnarray*}%
Hence%
\begin{equation*}
\frac{1}{2\pi }\int_{\Omega _{0}}\left( K-1\right) ^{+}d\mu =\frac{1}{2\pi }%
\int_{\Omega _{0}}\left( f-\frac{1}{2}\Delta \log h\right) ^{+}dx=\Theta <1,
\end{equation*}%
and%
\begin{equation*}
\mu \left( \Omega _{0}\right) =\int_{\Omega _{0}}he^{2u}dx\leq 4\pi \left(
1-\Theta \right) .
\end{equation*}%
It follows from Example \ref{ex3.2} that $\left( \Omega _{0},g\right) $
satisfies the $\left( 1-\Theta ,1\right) $-isoperimetric inequality
\end{proof}

If we replace the reference metric from Euclidean metric to an arbitrary
one, we end up with the following formulation.

\begin{lemma}
\label{lem3.1}Let $\left( M,g\right) $ be a simply connected Riemann surface with curvature $K$,
and $u\in C^{\infty }\left( M\right) $. We write%
\begin{equation}
-\Delta _{g}u=e^{2u}+f
\end{equation}%
and%
\begin{equation}
\Theta =\frac{1}{2\pi }\int_{M}\left( f+K\right) ^{+}d\mu .
\end{equation}%
If $\Theta <1$ and%
\begin{equation}
\int_{M}e^{2u}d\mu \leq 4\pi \left( 1-\Theta \right) ,
\end{equation}%
then $\left( M,e^{2u}g\right) $ satisfies the $\left( 1-\Theta ,1\right) $%
-isoperimetric inequality.
\end{lemma}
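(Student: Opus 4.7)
The plan is to reduce Lemma~\ref{lem3.1} to Example~\ref{ex3.2} by passing to the conformal metric $\tilde g = e^{2u} g$, so that the Liouville term $e^{2u}$ in the PDE becomes the constant $1$ in the curvature of $\tilde g$, and the remaining source $f$ (together with the background curvature $K$) is absorbed into the excess curvature $\tilde K - 1$.

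First I would compute the Gauss curvature of $\tilde g = e^{2u}g$ using the standard conformal change formula
\[
\tilde K = e^{-2u}\bigl(K - \Delta_g u\bigr).
\]
Substituting the given equation $-\Delta_g u = e^{2u} + f$ gives
\[
\tilde K = e^{-2u}\bigl(K + e^{2u} + f\bigr) = 1 + e^{-2u}(K+f),
\]
so $(\tilde K - 1)^{+} = e^{-2u}(K+f)^{+}$. Since $d\tilde\mu = e^{2u}\,d\mu$, this identity transports cleanly to an integral identity:
\[
\frac{1}{2\pi}\int_M (\tilde K - 1)^{+}\,d\tilde\mu = \frac{1}{2\pi}\int_M (K+f)^{+}\,d\mu = \Theta.
\]

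Next I would verify the remaining hypotheses of Example~\ref{ex3.2} for $(M,\tilde g)$ with the parameter $a=1$. Simple connectedness is a topological invariant and is inherited by $(M,\tilde g)$ from $(M,g)$. The bound $\Theta < 1$ is assumed, and the standing area hypothesis $\int_M e^{2u}\,d\mu \leq 4\pi(1-\Theta)$ is precisely $\tilde\mu(M) \leq 4\pi(1-\Theta)/a$ with $a=1$.

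Finally I would apply Example~\ref{ex3.2} to $(M,\tilde g)$ with $a=1$ and the value of $\Theta$ just computed; this yields the $(1-\Theta,1)$-isoperimetric inequality for $\tilde g = e^{2u} g$, which is the conclusion of the lemma. I do not anticipate any real obstacle: the argument is essentially a change-of-reference-metric restatement of Example~\ref{ex3.3}. The only place to be careful is matching the sign convention in the conformal change of Gauss curvature and checking that the two definitions of $\Theta$ coincide after the substitution $d\tilde\mu = e^{2u}d\mu$, both of which are routine.
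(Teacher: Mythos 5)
Your proposal is correct and matches the paper's own proof essentially verbatim: compute $\tilde K = 1 + e^{-2u}(K+f)$ from the conformal change formula and the PDE, use $d\tilde\mu = e^{2u}d\mu$ to identify $\frac{1}{2\pi}\int_M(\tilde K-1)^+\,d\tilde\mu = \Theta$, and invoke Example~\ref{ex3.2} with $a=1$.
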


\begin{proof}
Let $\widetilde{g}=e^{2u}g$, then%
\begin{equation*}
\widetilde{K}=e^{-2u}\left( K-\Delta u\right) =1+e^{-2u}\left( f+K\right) .
\end{equation*}%
In particular,%
\begin{equation*}
\frac{1}{2\pi }\int_{M}\left( \widetilde{K}-1\right) ^{+}d\widetilde{\mu }=%
\frac{1}{2\pi }\int_{M}\left( f+K\right) ^{+}d\mu =\Theta <1,
\end{equation*}%
and%
\begin{equation*}
\widetilde{\mu }\left( M\right) =\int_{M}e^{2u}d\mu \leq 4\pi \left(
1-\Theta \right) .
\end{equation*}%
It follows from Example \ref{ex3.2} that $\left( M,\widetilde{g}\right) $
satisfies the $\left( 1-\Theta ,1\right) $-isoperimetric inequality
\end{proof}

Note that Lemma \ref{lem3.1} also follows form Example \ref{ex3.3} and
Riemann mapping theorem. With Lemma \ref{lem3.1} at hand, we can deduce
easily a variation of sphere covering inequality.

\begin{proposition}
\label{prop3.1}Let $\left( M,g\right) $ be a simply connected Riemann
surface, and $u_{1}\in C^{\infty }\left( M\right) $. We write%
\begin{equation}
-\Delta _{g}u_{1}=e^{2u_{1}}+f
\end{equation}%
and%
\begin{equation}
\Theta =\frac{1}{2\pi }\int_{M}\left( f+K\right) ^{+}d\mu .
\end{equation}%
Here $K$ is the curvature of $\left( M,g\right)$. Assume $\Theta <1$ and%
\begin{equation}
\int_{M}e^{2u_{1}}d\mu \leq 4\pi \left( 1-\Theta \right) .
\end{equation}%
Let $\Omega \subset M$ be a domain with a compact closure and nonempty
boundary. Assume $u_{2}\in C^{\infty }\left( \overline{\Omega }\right) $ and 
$0<\lambda \leq 1$ such that%
\begin{eqnarray*}
\Delta _{g}u_{2}+\lambda e^{2u_{2}} &\geq &\Delta _{g}u_{1}+e^{2u_{1}}%
\text{ in }\Omega \text{,} \\
u_{2} &>&u_{1}\text{ in }\Omega , \\
\left. u_{2}\right\vert _{\partial \Omega } &=&\left. u_{1}\right\vert
_{\partial \Omega }.
\end{eqnarray*}%
Then%
\begin{equation}
\int_{\Omega }e^{2u_{1}}d\mu +\int_{\Omega }e^{2u_{2}}d\mu \geq \frac{4\pi
\left( 1-\Theta \right) }{\lambda }.
\end{equation}
\end{proposition}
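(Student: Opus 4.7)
The plan is to perform a conformal change of metric so that the hypotheses on $u_{1}$ become built into the background, and then quote Theorem \ref{thm3.1} directly for the difference $u = u_{2}-u_{1}$.

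First, I would set $\widetilde{g} = e^{2u_{1}}g$. Lemma \ref{lem3.1} applies verbatim to $u_{1}$: the equation $-\Delta_{g}u_{1} = e^{2u_{1}}+f$, the bound $\Theta < 1$, and the area constraint $\int_{M}e^{2u_{1}}d\mu \leq 4\pi(1-\Theta)$ are exactly its hypotheses, so $(M,\widetilde{g})$ satisfies the $(1-\Theta,1)$-isoperimetric inequality. Thus $\widetilde{g}$ is the natural background metric on which to run Theorem \ref{thm3.1} with $\theta = 1-\Theta$ and $\kappa = 1$.

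Next I would translate the differential inequality on $u = u_{2}-u_{1}$. Since $\Delta_{\widetilde{g}} = e^{-2u_{1}}\Delta_{g}$, the assumption $\Delta_{g}u_{2}+\lambda e^{2u_{2}} \geq \Delta_{g}u_{1}+e^{2u_{1}}$ rearranges to $\Delta_{g}u \geq e^{2u_{1}}-\lambda e^{2u_{1}}e^{2u}$, which in the new metric becomes
\begin{equation*}
-\Delta_{\widetilde{g}}u + 1 \leq \lambda e^{2u} \quad \text{in } \Omega.
\end{equation*}
The boundary conditions $u_{2}|_{\partial\Omega}=u_{1}|_{\partial\Omega}$ and $u_{2}>u_{1}$ in $\Omega$ give $u|_{\partial\Omega}=0$ and $u>0$ in $\Omega$. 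So $u$ satisfies exactly the hypotheses of Theorem \ref{thm3.1} on $(M,\widetilde{g})$ with forcing $f\equiv 0$ (hence the $\Theta$ appearing in that theorem is $0$), with $\kappa=1$, $\theta = 1-\Theta$, and $0<\lambda\leq 1=\kappa$.

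Applying Theorem \ref{thm3.1} gives
\begin{equation*}
\int_{\Omega} e^{2u}\, d\widetilde{\mu} + \widetilde{\mu}(\Omega) \geq \frac{4\pi(1-\Theta)}{\lambda}.
\end{equation*}
Translating back via $d\widetilde{\mu} = e^{2u_{1}}d\mu$, we have $\widetilde{\mu}(\Omega) = \int_{\Omega}e^{2u_{1}}d\mu$ and $\int_{\Omega}e^{2u}d\widetilde{\mu} = \int_{\Omega}e^{2u_{2}}d\mu$, yielding the stated inequality. There is no real obstacle here — the proof is essentially bookkeeping. The only point requiring some care is verifying that the conformal rescaling turns the mixed differential inequality on $u_{1},u_{2}$ into the clean one-sided inequality on $u$ that Theorem \ref{thm3.1} demands, with the correct sign and constant $\kappa=1$.
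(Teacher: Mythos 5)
Your proof is correct and follows exactly the same route as the paper: conformal change to $\widetilde{g}=e^{2u_{1}}g$, apply Lemma~\ref{lem3.1} to obtain the $(1-\Theta,1)$-isoperimetric inequality, derive $-\Delta_{\widetilde{g}}u+1\leq\lambda e^{2u}$ for $u=u_{2}-u_{1}$, and invoke Theorem~\ref{thm3.1}. The only cosmetic point is that your phrase ``forcing $f\equiv 0$'' would more precisely be ``forcing $f\leq 0$, hence $f^{+}=0$ and the $\Theta$ of Theorem~\ref{thm3.1} vanishes,'' since the hypothesis there is an equality and you land on an inequality — but this is exactly how the paper reads it as well.
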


Note that Theorem \ref{thm1.1variant} is a special case of Proposition \ref%
{prop3.1}.

\begin{proof}[Proof of Proposition \protect\ref{prop3.1}]
Let $\widetilde{g}=e^{2u_{1}}g$, then it follows from Lemma \ref{lem3.1}
that $\left( M,\widetilde{g}\right) $ satisfies the $\left( 1-\Theta
,1\right) $-isoperimetric inequality. Let $u=u_{2}-u_{1}$, then on $\Omega $
we have%
\begin{equation*}
-\Delta _{g}u\leq e^{2u_{1}}\left( \lambda e^{2u}-1\right) .
\end{equation*}%
Hence%
\begin{equation*}
-\Delta _{\widetilde{g}}u\leq \lambda e^{2u}-1.
\end{equation*}%
Moreover $u>0$ in $\Omega $ and $\left. u\right\vert _{\partial \Omega }=0$,
it follows from Theorem \ref{thm3.1} that%
\begin{equation*}
\int_{\Omega }e^{2u}d\widetilde{\mu }+\widetilde{\mu }\left( \Omega \right)
\geq \frac{4\pi \left( 1-\Theta \right) }{\lambda }.
\end{equation*}%
In another word,%
\begin{equation*}
\int_{\Omega }e^{2u_{1}}d\mu +\int_{\Omega }e^{2u_{2}}d\mu \geq \frac{4\pi
\left( 1-\Theta \right) }{\lambda }.
\end{equation*}
\end{proof}

Using Theorem \ref{thm3.2}, with the same proof, we also have a dual
inequality generalizing Theorem \ref{thm1.2}.

\begin{proposition}
\label{prop3.2}Let $\left( M,g\right) $ be a simply connected Riemann
surface with curvature $K$, and $u_{1}\in C^{\infty }\left( M\right) $. We write%
\begin{equation}
-\Delta _{g}u_{1}=e^{2u_{1}}+f
\end{equation}%
and%
\begin{equation}
\Theta =\frac{1}{2\pi }\int_{M}\left( f+K\right) ^{+}d\mu .
\end{equation}%
Assume $\Theta <1$ and%
\begin{equation}
\int_{M}e^{2u_{1}}d\mu \leq 4\pi \left( 1-\Theta \right) .
\end{equation}%
Let $\Omega \subset M$ be a domain with compact closure and nonempty
boundary. Assume $u_{2}\in C^{\infty }\left( \overline{\Omega }\right) $
such that%
\begin{eqnarray*}
\Delta _{g}u_{2}+e^{2u_{2}} &\leq &\Delta _{g}u_{1}+e^{2u_{1}}\text{ in }%
\Omega \text{,} \\
u_{2} &<&u_{1}\text{ in }\Omega , \\
\left. u_{2}\right\vert _{\partial \Omega } &=&\left. u_{1}\right\vert
_{\partial \Omega }.
\end{eqnarray*}%
Then%
\begin{equation}
\int_{\Omega }e^{2u_{1}}d\mu +\int_{\Omega }e^{2u_{2}}d\mu \geq 4\pi \left(
1-\Theta \right) .
\end{equation}
\end{proposition}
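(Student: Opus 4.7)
The plan is to mirror the argument of Proposition 3.1, substituting Theorem \ref{thm3.2} for Theorem \ref{thm3.1} at the critical step. First, since $u_1$ satisfies the stated hypotheses, Lemma \ref{lem3.1} applies verbatim and shows that the conformal metric $\widetilde{g} = e^{2u_1} g$ makes $(M,\widetilde{g})$ into a surface satisfying the $(1-\Theta, 1)$-isoperimetric inequality. This is the same geometric setup used in the proof of Proposition \ref{prop3.1}; the only difference ahead will be in how the subsolution/supersolution dichotomy feeds into the subsequent ODE argument.

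Next, I introduce $u = u_2 - u_1$. The boundary condition gives $u|_{\partial \Omega} = 0$ and the strict inequality $u_2 < u_1$ gives $u < 0$ on $\Omega$, matching the negative-sign hypothesis of Theorem \ref{thm3.2}. The differential inequality rearranges as
\begin{equation*}
-\Delta_g u \geq e^{2u_1}\bigl(e^{2u} - 1\bigr),
\end{equation*}
and dividing through by $e^{2u_1}$ (which is exactly what changing to the $\widetilde{g}$-Laplacian does in dimension two) yields
\begin{equation*}
-\Delta_{\widetilde{g}} u + 1 \geq e^{2u} \quad \text{on } \Omega.
\end{equation*}
So on $(M,\widetilde{g})$ we are precisely in the hypotheses of Theorem \ref{thm3.2} with $\theta = 1 - \Theta$, $\kappa = \lambda = 1$, and $f \equiv 0$ (hence the $\Theta$ appearing inside Theorem \ref{thm3.2} is zero).

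Applying Theorem \ref{thm3.2} directly gives
\begin{equation*}
\int_\Omega e^{2u} \, d\widetilde{\mu} + \widetilde{\mu}(\Omega) \geq 4\pi(1-\Theta).
\end{equation*}
Finally, I translate back to the original metric: $d\widetilde{\mu} = e^{2u_1}\, d\mu$, so $\int_\Omega e^{2u} d\widetilde{\mu} = \int_\Omega e^{2u_2} d\mu$ and $\widetilde{\mu}(\Omega) = \int_\Omega e^{2u_1} d\mu$, giving the desired conclusion.

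There is no genuine obstacle — the argument is entirely parallel to that of Proposition \ref{prop3.1}, and the only thing to be careful about is the sign flip in the differential inequality after subtraction, which aligns cleanly with Theorem \ref{thm3.2} because $u < 0$ reverses the role of $e^{2u} - 1$ versus $1 - e^{2u}$. The mild point worth double-checking is that the $f \equiv 0$ reduction legitimately uses the $\Theta = 0$ branch of Theorem \ref{thm3.2} (so that the factor $4\pi(\theta + \Theta)$ there becomes $4\pi(1-\Theta)$, not something involving the original $\Theta$ twice); this is automatic since all of the curvature defect from $u_1$ has already been absorbed into the choice of background metric $\widetilde{g}$ through Lemma \ref{lem3.1}.
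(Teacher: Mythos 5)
Your proof is correct and is exactly what the paper intends: the paper states Proposition \ref{prop3.2} immediately after the sentence ``Using Theorem \ref{thm3.2}, with the same proof, we also have a dual inequality generalizing Theorem \ref{thm1.2},'' i.e.\ the argument is declared to be the proof of Proposition \ref{prop3.1} with Theorem \ref{thm3.1} replaced by Theorem \ref{thm3.2}, which is precisely what you carry out (including the correct treatment of the nonnegative residual $f$ yielding $\Theta=0$ in the application of Theorem \ref{thm3.2}).
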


\begin{example}
\label{ex3.4}Let $\left( M,g\right) $ be a simply connected Riemann surface with curvature $K$,
and $u_{1},h\in C^{\infty }\left( M\right) $ with $h>0$. We write%
\begin{equation}
-\Delta _{g}u_{1}=he^{2u_{1}}+f
\end{equation}%
and%
\begin{equation}
\Theta =\frac{1}{2\pi }\int_{M}\left( f+K-\frac{1}{2}\Delta _{g}\log
h\right) ^{+}d\mu .
\end{equation}%
Assume $\Theta <1$ and%
\begin{equation}
\int_{M}he^{2u_{1}}d\mu \leq 4\pi \left( 1-\Theta \right) .
\end{equation}%
Let $\Omega \subset M$ be a domain with compact closure and nonempty
boundary. Assume $u_{2}\in C^{\infty }\left( \overline{\Omega }\right) $ and 
$0<\lambda \leq 1$ such that%
\begin{eqnarray*}
\Delta _{g}u_{2}+\lambda he^{2u_{2}} &\geq &\Delta _{g}u_{1}+he^{2u_{1}}%
\text{ in }\Omega \text{,} \\
u_{2} &>&u_{1}\text{ in }\Omega , \\
\left. u_{2}\right\vert _{\partial \Omega } &=&\left. u_{1}\right\vert
_{\partial \Omega }.
\end{eqnarray*}%
Then%
\begin{equation}
\int_{\Omega }he^{2u_{1}}d\mu +\int_{\Omega }he^{2u_{2}}d\mu \geq \frac{4\pi
\left( 1-\Theta \right) }{\lambda }.
\end{equation}
\end{example}

\begin{proof}
Let%
\begin{equation*}
v_{1}=u_{1}+\frac{1}{2}\log h,\quad v_{2}=u_{2}+\frac{1}{2}\log h,
\end{equation*}%
then%
\begin{equation}
-\Delta _{g}v_{1}=e^{2v_{1}}+f-\frac{1}{2}\Delta \log h.
\end{equation}%
Moreover%
\begin{eqnarray*}
\Delta _{g}v_{2}+\lambda e^{2v_{2}} &\geq &\Delta _{g}v_{1}+e^{2v_{1}}%
\text{ in }\Omega \text{,} \\
v_{2} &>&v_{1}\text{ in }\Omega , \\
\left. v_{2}\right\vert _{\partial \Omega } &=&\left. v_{1}\right\vert
_{\partial \Omega }.
\end{eqnarray*}%
Then we can apply Proposition \ref{prop3.1} to get the desired conclusion.
\end{proof}

By a straightforward modification we can also deal with the case $h$ changes
sign.

\begin{example}
\label{ex3.5}Let $\left( M,g\right) $ be a simply connected Riemann surface with curvature $K$,
and $u_{1},h,H\in C^{\infty }\left( M\right) $ with $h\leq H$ and $H>0$. We
write%
\begin{equation}
-\Delta _{g}u_{1}=He^{2u_{1}}+f
\end{equation}%
and%
\begin{equation}
\Theta =\frac{1}{2\pi }\int_{M}\left( f+K-\frac{1}{2}\Delta _{g}\log
H\right) ^{+}d\mu .
\end{equation}%
Assume $\Theta <1$ and%
\begin{equation}
\int_{M}He^{2u_{1}}d\mu \leq 4\pi \left( 1-\Theta \right) .
\end{equation}%
Let $\Omega \subset M$ be a domain with compact closure and nonempty
boundary. Assume $u_{2}\in C^{\infty }\left( \overline{\Omega }\right) $
such that%
\begin{eqnarray*}
\Delta _{g}u_{2}+he^{2u_{2}} &\geq &\Delta _{g}u_{1}+he^{2u_{1}}\text{ in }%
\Omega \text{,} \\
u_{2} &>&u_{1}\text{ in }\Omega , \\
\left. u_{2}\right\vert _{\partial \Omega } &=&\left. u_{1}\right\vert
_{\partial \Omega }.
\end{eqnarray*}%
Then%
\begin{equation}
\int_{\Omega }He^{2u_{1}}d\mu +\int_{\Omega }He^{2u_{2}}d\mu \geq 4\pi
\left( 1-\Theta \right) .
\end{equation}
\end{example}

\begin{proof}
We have%
\begin{eqnarray*}
\Delta _{g}u_{2} &\geq &\Delta _{g}u_{1}-h\left(
e^{2u_{2}}-e^{2u_{1}}\right) \\
&\geq &\Delta _{g}u_{1}-H\left( e^{2u_{2}}-e^{2u_{1}}\right) .
\end{eqnarray*}%
Hence%
\begin{equation*}
\Delta _{g}u_{2}+He^{2u_{2}}\geq \Delta _{g}u_{1}+He^{2u_{1}}\text{ in }%
\Omega \text{.}
\end{equation*}%
Then we can apply Example \ref{ex3.4}.
\end{proof}

Next we turn to solutions of semilinear equations with equal weights, see \cite{BGJM, GM3}. 

\begin{proposition}
\label{prop3.3}Let $\Omega \subset \mathbb{R}^{2}$ be a bounded open simply
connected domain. Assume $u_{1},u_{2}\in C^{\infty }\left( \overline{\Omega }%
\right) $ such that%
\begin{equation}
\Delta u_{1}+e^{2u_{1}}\geq 0
\end{equation}%
and%
\begin{eqnarray*}
\Delta u_{1}+e^{2u_{1}} &\geq & \Delta u_{2}+e^{2u_{2}}\text{ in }\Omega ,
\\
u_{1}+c &>&u_{2}\text{ in }\Omega , \\
\left. u_{1}\right\vert _{\partial \Omega }+c &=&\left. u_{2}\right\vert
_{\partial \Omega }.
\end{eqnarray*}%
Here $c$ is a constant. If%
\begin{equation}
\int_{\Omega }e^{2u_{1}}dx=\int_{\Omega }e^{2u_{2}}dx=\rho ,
\end{equation}%
then $\rho \geq 4\pi $.
\end{proposition}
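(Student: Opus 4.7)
The plan is to exploit the equal--integrals condition qualitatively (to pin down the sign of $c$) and then run a weighted version of the dual sphere covering argument.

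First I would observe that the hypotheses actually force $c>0$. Indeed, if $c\leq 0$, then $u_{1}+c\leq u_{1}$ pointwise, and the strict inequality $u_{2}<u_{1}+c$ in $\Omega$ would yield $u_{2}<u_{1}$ strictly in $\Omega$, hence $e^{2u_{2}}<e^{2u_{1}}$ pointwise, contradicting $\int_{\Omega}e^{2u_{1}}dx=\int_{\Omega}e^{2u_{2}}dx$. So throughout the rest of the argument we may assume $c>0$.

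Next I would argue by contradiction: suppose $\rho<4\pi$. Following the philosophy of \S\ref{sec2}, take $g=e^{2u_{1}}|dx|^{2}$. The assumption $\Delta u_{1}+e^{2u_{1}}\geq 0$ gives Gauss curvature $K\leq 1$, and $\mu(\Omega)=\rho<4\pi$, so (after a harmless $C^{2}$ extension of $u_{1}$ to a slightly larger simply connected neighborhood) Bol's inequality in the form of \cite[Lemma 4.2]{CCL} shows that $(\Omega,g)$ satisfies the $(1,1)$-isoperimetric inequality. Introduce
\[
w:=u_{2}-u_{1}-c,
\]
so that $w<0$ in $\Omega$ and $w=0$ on $\partial\Omega$. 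Translating the hypothesis $\Delta u_{1}+e^{2u_{1}}\geq \Delta u_{2}+e^{2u_{2}}$ through $\Delta_{g}=e^{-2u_{1}}\Delta$ and $e^{2u_{2}}=e^{2c}e^{2w}e^{2u_{1}}$ gives
\[
-\Delta_{g}w+1\geq e^{2c}e^{2w}\quad\text{in }\Omega,
\]
which is precisely the setting of Theorem \ref{thm1.4} (equivalently Theorem \ref{thm3.2} with $\theta=\kappa=1$ and $\Theta=0$), with the decisive feature that the exponential coefficient is $\lambda=e^{2c}>1$ rather than $1$.

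Applying Theorem \ref{thm1.4} produces
\[
4\pi\int_{\Omega}e^{2w}d\mu-e^{2c}\Big(\int_{\Omega}e^{2w}d\mu\Big)^{2}\geq 4\pi\mu(\Omega)-\mu(\Omega)^{2}.
\]
Using $\int_{\Omega}e^{2w}d\mu=e^{-2c}\int_{\Omega}e^{2u_{2}}dx=e^{-2c}\rho$ and $\mu(\Omega)=\rho$, this collapses to $(e^{-2c}-1)(4\pi\rho-\rho^{2})\geq 0$. But $c>0$ makes the first factor strictly negative and $0<\rho<4\pi$ makes the second strictly positive, a contradiction. Hence $\rho\geq 4\pi$.

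The main obstacle, in my view, is recognizing that the equal--weight hypothesis should be used qualitatively to force $c>0$ rather than quantitatively; a naive application of the dual sphere covering inequality (Theorem \ref{thm1.2}) only yields $2\rho\geq 4\pi$. The strict improvement comes from the mismatch $\lambda=e^{2c}>\kappa=1$ produced by absorbing the shift $c$ into the exponential term of $w$, since the isoperimetric side of Theorem \ref{thm1.4} keeps the constant $\kappa=1$ dictated by the background metric $g$.
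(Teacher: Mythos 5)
Your proposal is correct and follows essentially the same route as the paper: both take $g=e^{2u_1}|dx|^2$, set $w=u_2-u_1-c$, derive $-\Delta_g w+1\geq e^{2c}e^{2w}$, and apply Theorem~\ref{thm1.4}, reducing to the factorization $(e^{-2c}-1)\rho(4\pi-\rho)\geq 0$. The only cosmetic differences are that you spell out why $c>0$ (which the paper merely notes) and phrase the argument as a proof by contradiction rather than as ``$\rho\leq 4\pi$ forces $\rho=4\pi$.''
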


\begin{proof}
Note that $c>0$. If $\rho \leq 4\pi $, we will show $\rho =4\pi $. Indeed
let $g=e^{2u_{1}}\left\vert dx\right\vert ^{2}$, then $K\leq 1$ and $\mu
\left( \Omega \right) =\rho \leq 4\pi $. If we write $u=u_{2}-u_{1}-c$, then%
\begin{equation*}
-\Delta _{g}u+1\geq e^{2c}\cdot e^{2u}.
\end{equation*}%
Moreover $u<0$ in $\Omega $ and $\left. u\right\vert _{\partial \Omega }=0$.
It follows from Theorem \ref{thm1.4} that%
\begin{equation*}
4\pi \int_{\Omega }e^{2u}d\mu -e^{2c}\left( \int_{\Omega }e^{2u}d\mu \right)
^{2}\geq 4\pi \mu \left( \Omega \right) -\mu ^{2}\left( \Omega \right).
\end{equation*}%
Hence%
\begin{equation*}
0\geq \left( 1-e^{-2c}\right) \rho \left( 4\pi -\rho \right)
\end{equation*}%
and we get $\rho \geq 4\pi $.
\end{proof}

\begin{proposition}
\label{prop3.4}Let $\Omega \subset \mathbb{R}^{2}$ be a bounded open simply
connected domain. Assume $u_{1},u_{2}\in C^{\infty }\left( \overline{\Omega }%
\right) $ such that%
\begin{eqnarray*}
\Delta u_{1}+e^{2u_{1}} &=&\Delta u_{2}+e^{2u_{2}}\geq 0\text{ in }\Omega ,
\\
\left. u_{1}\right\vert _{\partial \Omega }+c &=&\left. u_{2}\right\vert
_{\partial \Omega }.
\end{eqnarray*}%
Here $c$ is a constant. If $u_{1}$ is not identically equal to $u_{2}$ and%
\begin{equation}
\int_{\Omega }e^{2u_{1}}dx=\int_{\Omega }e^{2u_{2}}dx=\rho ,
\end{equation}%
then $\rho \geq 4\pi $.
\end{proposition}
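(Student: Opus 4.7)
The plan is to extend the strategy behind Proposition \ref{prop3.3} by combining Theorems \ref{thm1.3} and \ref{thm1.4}, applied respectively to the super- and sublevel sets of a single auxiliary function. Swapping $u_1$ and $u_2$ if necessary, I may assume $c\ge 0$, and I argue by contradiction, supposing $\rho<4\pi$. Take $g=e^{2u_1}\left\vert dx\right\vert^2$ as the background metric and set $u=u_2-u_1-c$, so that $u\vert_{\partial\Omega}=0$. A direct computation from the hypothesis $\Delta u_1+e^{2u_1}=\Delta u_2+e^{2u_2}$ gives
$$
-\Delta_g u+1=e^{2c}e^{2u}\quad\text{in }\Omega,
$$
together with $K\le 1$ and $\mu(\Omega)=\rho<4\pi$. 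Also $u\not\equiv 0$: if $u\equiv 0$ then $e^{2u_2}=e^{2c}e^{2u_1}$ on $\overline{\Omega}$, and the equal-integral hypothesis forces $e^{2c}=1$, hence $c=0$ and $u_1\equiv u_2$, contradicting the hypothesis of the proposition.

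The crucial observation is that the PDE above is an \emph{equality}, so both Theorem \ref{thm1.3} (which needs the $\le$ direction) and Theorem \ref{thm1.4} (which needs the $\ge$ direction) become available simultaneously. I apply Theorem \ref{thm1.3} on $\Omega_+:=\{u>0\}$ and Theorem \ref{thm1.4} on $\Omega_-:=\{u<0\}$, both with $\lambda=e^{2c}$. Writing $I_\pm=\int_{\Omega_\pm}e^{2u}\,d\mu$ and $\mu_\pm=\mu(\Omega_\pm)$, subtracting the two resulting inequalities and factoring the differences of squares yields
$$
(I_+-I_-)\bigl[4\pi-e^{2c}(I_++I_-)\bigr]\le (\mu_+-\mu_-)\bigl[4\pi-(\mu_++\mu_-)\bigr].
$$
After a Morse-type perturbation making $\mu(\{u=0\})=0$ (as in the proofs of Theorems \ref{thm1.3} and \ref{thm1.4}), the identities $e^{2c}(I_++I_-)=\int_\Omega e^{2u_2}\,dx=\rho$ and $\mu_++\mu_-=\rho$ cause both bracketed factors to collapse to the common value $4\pi-\rho>0$. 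Division then reduces the whole estimate to $I_+-\mu_+\le I_--\mu_-$.

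This last inequality is impossible, which gives the desired contradiction: $I_+-\mu_+=\int_{\Omega_+}(e^{2u}-1)\,d\mu\ge 0$ with strict inequality when $\Omega_+$ has positive measure, while $I_--\mu_-=\int_{\Omega_-}(e^{2u}-1)\,d\mu\le 0$ with strict inequality when $\Omega_-$ has positive measure; since $u\not\equiv 0$, at least one of these strict inequalities holds. I expect the main obstacle to be the algebraic bookkeeping that makes the two theorems combine cleanly: the common factor $4\pi-\rho$ appears on both sides of the subtracted inequality only because the same $\lambda=e^{2c}$ is used on both halves together with the two matching area identities, and it is this coincidence that replaces the sign constraint $u_1+c>u_2$ used in Proposition \ref{prop3.3} and allows the contradiction to go through.
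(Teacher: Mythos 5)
Your argument reproduces the paper's proof step for step: same conformal metric $g=e^{2u_1}\left\vert dx\right\vert^{2}$, same auxiliary function $u=u_2-u_1-c$ satisfying $-\Delta_g u+1=e^{2c}e^{2u}$ exactly, same decomposition into $\Omega^{+}=\{u>0\}$ and $\Omega^{-}=\{u<0\}$ with Theorem~\ref{thm1.3} on one piece and Theorem~\ref{thm1.4} on the other, same algebraic collapse of the two quadratic bounds to $(4\pi-\rho)\bigl[(I_{+}-\mu_{+})-(I_{-}-\mu_{-})\bigr]\le 0$, and the same sign argument to finish. The contradiction framing and the normalization $c\ge 0$ (which you never actually use) are cosmetic.

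The one place your write-up diverges from the paper, and where your stated justification does not actually work, is the step $\mu(\{u=0\})=0$, which is needed to get the two identities $\mu_{+}+\mu_{-}=\rho$ and $e^{2c}(I_{+}+I_{-})=\rho$. You appeal to ``a Morse-type perturbation as in the proofs of Theorems~\ref{thm1.3} and~\ref{thm1.4},'' but that device lives \emph{inside} those proofs (to make the co-area argument clean) and is not available to you here, since you invoke the theorems as black boxes applied to the fixed function $u$. Perturbing $u$ itself (say to $u-\delta$, or by mollification) would destroy either the boundary condition $u\vert_{\partial\Omega}=0$ or the exact PDE equality on which the whole two-sided argument hinges. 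Note also that if $\mu(\{u=0\})=\mu_0>0$ the two bracketed factors become $4\pi-\rho+e^{2c}\mu_0$ and $4\pi-\rho+\mu_0$, which no longer cancel when $c\neq 0$, so the point is not ignorable. The paper disposes of it by the unique continuation property: $u$ solves a second-order elliptic equation, $u\not\equiv 0$, hence $\left\vert\Omega\setminus(\Omega^{+}\cup\Omega^{-})\right\vert=0$. Replacing your perturbation remark by this observation closes the gap; everything else is sound.
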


\begin{proof}
If $\rho \leq 4\pi $, we will show $\rho =4\pi $. Indeed let $%
g=e^{2u_{1}}\left\vert dx\right\vert ^{2}$, then $K\leq 1$ and $\mu \left(
\Omega \right) =\rho \leq 4\pi $. If we write $u=u_{2}-u_{1}-c$, then%
\begin{equation}
-\Delta _{g}u+1=e^{2c}\cdot e^{2u}.
\end{equation}%
Let%
\begin{eqnarray*}
\Omega ^{+} &=&\left\{ x\in \Omega :u\left( x\right) >0\right\} , \\
\Omega ^{-} &=&\left\{ x\in \Omega :u\left( x\right) <0\right\} ,
\end{eqnarray*}%
then it follows from unique continuation property that $\left\vert \Omega
\backslash \left( \Omega ^{+}\cup \Omega ^{-}\right) \right\vert =0$. On $%
\Omega ^{+}$, by Theorem \ref{thm1.3} we have%
\begin{equation*}
4\pi \int_{\Omega ^{+}}e^{2u}d\mu -e^{2c}\left( \int_{\Omega ^{+}}e^{2u}d\mu
\right) ^{2}\leq 4\pi \mu \left( \Omega ^{+}\right) -\mu  ^{2}\left( \Omega
^{+}\right).
\end{equation*}%
On $\Omega ^{-}$, by Theorem \ref{thm1.4} we have%
\begin{equation*}
4\pi \int_{\Omega ^{-}}e^{2u}d\mu -e^{2c}\left( \int_{\Omega ^{-}}e^{2u}d\mu
\right) ^{2}\geq 4\pi \mu \left( \Omega ^{-}\right) -\mu ^{2}\left( \Omega
^{-}\right).
\end{equation*}%
Using%
\begin{eqnarray*}
\mu \left( \Omega ^{+}\right) +\mu \left( \Omega ^{-}\right)  &=&\rho , \\
\int_{\Omega ^{+}}e^{2u}d\mu +\int_{\Omega ^{-}}e^{2u}d\mu  &=&e^{-2c}\rho ,
\end{eqnarray*}%
subtracting the two inequalities we get%
\begin{equation*}
\left( 4\pi -\rho \right) \left( \int_{\Omega ^{+}}e^{2u}d\mu -\int_{\Omega
^{-}}e^{2u}d\mu \right) \leq \left( 4\pi -\rho \right) \left( \mu \left(
\Omega ^{+}\right) -\mu \left( \Omega ^{-}\right) \right) .
\end{equation*}%
In another word%
\begin{equation*}
\left( 4\pi -\rho \right) \left( \int_{\Omega ^{+}}e^{2u}d\mu -\mu \left(
\Omega ^{+}\right) +\mu \left( \Omega ^{-}\right) -\int_{\Omega
^{-}}e^{2u}d\mu \right) \leq 0.
\end{equation*}%
Since $u$ is not identically equal to $0$, we see%
\begin{equation*}
\int_{\Omega ^{+}}e^{2u}d\mu -\mu \left( \Omega ^{+}\right) +\mu \left(
\Omega ^{-}\right) -\int_{\Omega ^{-}}e^{2u}d\mu >0\text{.}
\end{equation*}%
Hence $\rho \geq 4\pi $.
\end{proof}

We can replace the Euclidean domain with a Riemann surface.

\begin{example}
\label{ex3.6}Let $\left( M,g\right) $ be a simply connected compact Riemann
surface with nonempty boundary, and $u_{1}\in C^{\infty }\left( M\right) $.
We write%
\begin{equation}
-\Delta _{g}u_{1}=e^{2u_{1}}+f
\end{equation}%
and%
\begin{equation}
\Theta =\frac{1}{2\pi }\int_{M}\left( f+K\right) ^{+}d\mu .
\end{equation}%
Here $K$ is the curvature of $g$. Assume $u_{2}\in C^{\infty }\left(
M\right) $ such that%
\begin{eqnarray*}
\Delta _{g}u_{1}+e^{2u_{1}} &\geq &\Delta _{g}u_{2}+e^{2u_{2}}\text{ in }M%
\text{,} \\
u_{1}+c &>&u_{2}\text{ in }M, \\
\left. u_{1}\right\vert _{\partial M}+c &=&\left. u_{2}\right\vert
_{\partial M}.
\end{eqnarray*}%
Here $c$ is a constant. If 
\begin{equation}
\int_{M}e^{2u_{1}}d\mu =\int_{M}e^{2u_{1}}d\mu =\rho ,
\end{equation}%
then%
\begin{equation}
\rho \geq 4\pi \left( 1-\Theta \right) .
\end{equation}
\end{example}

\begin{proof}
Without loss of generality we can assume $\Theta <1$ and $\rho \leq 4\pi
\left( 1-\Theta \right) $. Let $\widetilde{g}=e^{2u_{1}}g$, then Lemma \ref%
{lem3.1} implies $\left( M,\widetilde{g}\right) $ satisfies $\left( 1-\Theta
,1\right) $-isoperimetric inequality. If we write $u=u_{2}-u_{1}-c$, then%
\begin{equation*}
-\widetilde{\Delta }u+1\geq e^{2c}\cdot e^{2u}\text{ on }M\text{.}
\end{equation*}%
Moreover $u<0$ in $M$ and $\left. u\right\vert _{\partial M}=0$. It follows
from Theorem \ref{thm3.2} that%
\begin{equation*}
4\pi \left( 1-\Theta \right) \int_{M}e^{2u}d\widetilde{\mu }-e^{2c}\left(
\int_{M}e^{2u}d\widetilde{\mu }\right) ^{2}\geq 4\pi \left( 1-\Theta \right) 
\widetilde{\mu }\left( M\right) -\widetilde{\mu } ^{2} \left( M\right).
\end{equation*}%
Hence%
\begin{equation*}
0\geq \left( 1-e^{-2c}\right) \rho \left( 4\pi \left( 1-\Theta \right) -\rho
\right) .
\end{equation*}%
Since $c>0$, we get $\rho \geq 4\pi \left( 1-\Theta \right) $.
\end{proof}

Using the argument in Example \ref{ex3.4} we get

\begin{example}
\label{ex3.7}Let $\left( M,g\right) $ be a simply connected compact Riemann
surface with nonempty boundary and curvature $K$, and $u_{1},h\in C^{\infty }\left( M\right) $
with $h>0$. We write%
\begin{equation}
-\Delta _{g}u_{1}=he^{2u_{1}}+f
\end{equation}%
and%
\begin{equation}
\Theta =\frac{1}{2\pi }\int_{M}\left( f+K-\frac{1}{2}\Delta _{g}\log
h\right) ^{+}d\mu .
\end{equation}%
Assume $u_{2}\in C^{\infty }\left( M\right) $ such that%
\begin{eqnarray*}
\Delta _{g}u_{1}+he^{2u_{1}} &\geq &\Delta _{g}u_{2}+he^{2u_{2}}\text{ in }%
M\text{,} \\
u_{1}+c &>&u_{2}\text{ in }M, \\
\left. u_{1}\right\vert _{\partial M}+c &=&\left. u_{2}\right\vert
_{\partial M}.
\end{eqnarray*}%
Here $c$ is a constant. If%
\begin{equation}
\int_{M}he^{2u_{1}}d\mu =\int_{M}he^{2u_{1}}d\mu =\rho ,
\end{equation}%
then%
\begin{equation}
\rho \geq 4\pi \left( 1-\Theta \right) .
\end{equation}
\end{example}

In the same spirit as the proof of Proposition \ref{prop3.4} but using both
Theorem \ref{thm3.1} and \ref{thm3.2} instead we have

\begin{example}
\label{ex3.8}Let $\left( M,g\right) $ be a simply connected compact Riemann
surface with nonempty boundary and curvature $K$, and $u_{1},u_{2},h\in C^{\infty }\left(
M\right) $ with $h>0$. Assume%
\begin{eqnarray}
-\Delta u_{1}-he^{2u_{1}} &=&-\Delta u_{2}-he^{2u_{2}}=f\text{ in }M, \\
\left. u_{1}\right\vert _{\partial M}+c &=&\left. u_{2}\right\vert
_{\partial M}.
\end{eqnarray}%
Here $c$ is a constant. We denote%
\begin{equation}
\Theta =\frac{1}{2\pi }\int_{M}\left( f+K-\frac{1}{2}\Delta _{g}\log
h\right) ^{+}d\mu .
\end{equation}%
If $u_{1}$ is not identically equal to $u_{2}$ and%
\begin{equation}
\int_{M}he^{2u_{1}}d\mu =\int_{M}he^{2u_{1}}d\mu =\rho ,
\end{equation}%
then%
\begin{equation}
\rho \geq 4\pi \left( 1-\Theta \right) .
\end{equation}
\end{example}


\begin{thebibliography}{BGJM}
\bibitem[B]{B} C. Bandle. \textit{On a differential inequality and its
applications to geometry}. Math. Z. \textbf{147} (1976), no. 3, 253--261.

\bibitem[BC1]{BC1} D. Bartolucci and D. Castorina. \textit{Self gravitating
cosmic strings and the Alexandrov's inequality for Liouville-type equations}%
. Commun. Contemp. Math. \textbf{18} (2016), no. 4, 1550068, 26 pp.

\bibitem[BC2]{BC2} D. Bartolucci and D. Castorina. \textit{On a singular
Liouville-type equation and the Alexandrov isoperimetric inequality}. To
appear on Ann. Scuola Norm. Sup. Pisa Cl. Sci.

\bibitem[BGJM]{BGJM} D. Bartolucci, C. F. Gui, A. Jevnikar and A. Moradifam. 
\textit{A singular Sphere Covering Inequality: uniqueness and symmetry of
solutions to singular Liouville-type equations.} Math. Ann. to appear.


\bibitem[BL]{BL} D. Bartolucci, C.S. Lin, {\em Uniqueness Results for Mean Field Equations with Singular Data}.
Comm. in P.D.E. 34 (2009), no. 7, 676-702.

\bibitem[BL2]{BL2} D. Bartolucci, C.S. Lin, {\em Existence and uniqueness for
Mean Field Equations on multiply connected domains at the critical parameter}.
{Math. Ann.} 359 (2014), 1-44.

\bibitem[BLT]{BLT} D. Bartolucci, C.S. Lin, G. Tarantello, {\em Uniqueness and symmetry results for
solutions of a mean field equation on ${\mathbb{S}}^{2}$ via a new bubbling phenomenon}.
{Comm. Pure Appl. Math.}  64 (2011), no. 12, 1677-1730.

\bibitem[BCLT]{BCLT} D. Bartolucci, C.C. Chen, C.S. Lin, G. Tarantello,
{\em Profile of Blow Up Solutions To Mean Field Equations with Singular Data}.
Comm. in P.D.E.  29 (2004), no. 7-8, 1241-1265.


\bibitem[BT]{BT} D. Bartolucci, G. Tarantello, {\em Liouville type equations with
singular data and their applications to periodic multivortices for the
electroweak theory}. Comm. Math. Phys. 229 (2002), 3-47.

\bibitem[BT2]{BT2} D. Bartolucci, G. Tarantello, {\em Asymptotic blow-up analysis for singular Liouville type equations with
applications}. J. Diff. Eq. 262 (2017), no. 7, 3887-3931.


\bibitem[Be]{Be}
W. Beckner, \emph{Sharp Sobolev inequalities on the sphere and the Moser-Trudinger inequality}. Ann. of Math. (2) 138 (1993), no. 1, 213-242.


\bibitem[CLS]{CLS} X. Cabre, M. Lucia, and M. Sanchon. \emph{A mean field equation on a torus:
one-dimensional symmetry of solutions}. Comm. Partial Differential Equations, 30(7-9):1315-1330, (2005).

\bibitem[CaY]{CaY}  L. A. Caffarelli and Y. S. Yang. \emph{Vortex condensation in the Chern-Simons Higgs
model: an existence theorem}. Comm. Math. Phys., 168(2):321-336, (1995).

\bibitem[CLMP]{CLMP} E. Caglioti, P.L. Lions, C. Marchioro, M. Pulvirenti,
{\em A special class of stationary flows for two dimensional Euler equations: a
statistical mechanics description. } Comm. Math. Phys. 143 (1992),
501-525. 

\bibitem[CLMP2]{CLMP2} E. Caglioti, P.L. Lions, C. Marchioro, M. Pulvirenti,
{\em A special class of stationary flows for two dimensional Euler equations: a
statistical mechanics description. II}. Comm. Math. Phys. 174 (1995),
229-260.




\bibitem[C]{C} T. Carleman. Zur Theorie de Minimalfl\"{a}chen. \textit{Math.
Z}. \textbf{9} (1921), 154--160.

\bibitem[CCL]{CCL} S. Y. Chang, C. C. Chen and C. S. Lin. \textit{Extremal
functions for a mean field equation in two dimension}. Lectures on partial
differential equations, 61--93, New Stud. Adv. Math., 2, Int. Press,
Somerville, MA, 2003.


\bibitem[CFL]{CFL}
H. Chan, C.C. Fu, C.S. Lin, \emph{Non-topological multi-vortex solutions to the selfdual Chern-Simons-Higgs equation.} Comm. Math. Phys. \textbf{231} (2002), 189-221. 

\bibitem[CK]{CK} Sagun Chanillo and Michael Kiessling,
{\em Rotational symmetry of solutions of some nonlinear
problems in statistical mechanics and in geometry}. Comm. Math. Phys. 160 (1994),
217-238. 

\bibitem[CY]{CY} S. Y. Chang and P. C. Yang. \textit{Prescribing Gaussian
curvature on} $S^{2}$. Acta Math. \textbf{159} (1987), no. 3-4, 215--259.

\bibitem[CY2]{CY2} S.Y.A. Chang and P. C. Yang, {\em Conformal deformation of metrics on $S^2$}.  J. Differential Geom. (1988), 27(2): 259-296. 

\bibitem[CY3]{CY3} S.Y.A. Chang and P. C. Yang, \emph{The Inequality of Moser and Trudinger
and Applications to Conformal Geometry}, Communications on Pure and Applied Mathematics, Vol. LVI, 1135-1150 (2003).

\bibitem[DJLW]{DJLW} W. Ding, J. Jost, J. Li, G. Wang, {\em An analysis of the two vortex
case in the Chern-Simons Higgs model}.   Calc. Var. Partial Differential Equations 7(1):87-97, (1998).
(1999), 653-666.

\bibitem[LLTY]{LLTY} Youngae Lee, Chang-Shou Lin, Gabriella Tarantello, Wen Yang
{\em Sharp estimates for solutions of mean field
equations with collapsing singularity},  Communications in
Partial Differential Equations, 42:10, 1549-1597. 

\bibitem[BFR]{BFR} L. Berlyand,  J. Fuhrmann, V. Rybalko, \textit{Bifurcation of traveling waves in a Keller-Segel type free boundary model of cell motility},  Communications in Mathematical Sciences, to appear (2018).


\bibitem[GJM]{GJM}
C. Gui, A. Jevnikar, A. Moradifam, \emph{Symmetry and uniqueness of solutions to some Liouville-type equations and systems}. Communications in Partial Differential Equations 43 (3), 428-447

\bibitem[GL]{GL}
N. Ghoussoub, C.S. Lin, \emph{On the best constant in the Moser-Onofri-Aubin inequality}. Comm. Math. Phys. 298 (2010), no. 3, 869-878.

\bibitem[GM]{GM} C. Gui and A. Moradifam. \textit{The sphere covering
inequality and its applications.} Invent. Math.  (2018) 214: 1169. https://doi.org/10.1007/s00222-018-0820-2.

\bibitem[GM2]{GM2} C. Gui, A. Moradifam, {\em Symmetry of solutions of a mean field equation on flat tori}. International Mathematics Research Notices, , rnx121, https://doi.org/10.1093/imrn/rnx121.

\bibitem[GM3]{GM3} C. Gui, A. Moradifam, {\em Uniqueness of solutions of mean field equations in $\mathbb{R}^2$}. Proc. Amer. Math. Soc. 146 (2018), no. 3, 1231-1242. 


\bibitem[L]{L} C.S. Lin,  \emph{Uniqueness of solutions to the mean field equations for the spherical Onsager vortex}.  Arch. Ration. Mech. Anal.  153 (2000),  no. 2, 153-176.

\bibitem[L2]{L2} C.S. Lin,  \emph{Topological degree for mean field equations on $S^2$}. Duke Math. J. 104 (2000), no. 3, 501-536. 

\bibitem[LM]{LM} C.S. Lin, M. Lucia, {\em Uniqueness of solutions for a
mean field equation on torus}.  J. Diff. Eq.  229  (2006), no. 1, 172-185.

\bibitem[LM2]{LM2}
C.S Lin, M. Lucia, \emph{One-dimensional symmetry of periodic minimizers for a mean field equation.} Ann. Sc. Norm. Super. Pisa Cl. Sci. (5) 6 (2007), no. 2, 269-290.

\bibitem[LW]{LW} C.S. Lin, C.L. Wang, {\em Elliptic functions, Green functions
and the mean field equations on tori}. Ann. of Math. 172 (2010), no. 2, 911-954.

\bibitem[LW2]{LW2} C.S. Lin, C.L. Wang, {\em Mean field equations, hyperelliptic curves, and
modular forms: I}.  J. \'{E}c. polytech. Math. 4 (2017), 557–593. 

\bibitem[LWY]{LWY}
C.S. Lin, J. C. Wei, D. Ye, \emph{Classification and nondegeneracy of $SU(n + 1)$ Toda system}. Invent. Math. 190 (2012), no. 1, 169-207.




\bibitem[SSTW]{SSTW} Y. Shi, J. Sun, G. Tian, D. Wei, {\em Uniqueness of the Mean Field Equation and Rigidity of Hawking Mass}, https://arxiv.org/abs/1706.06766. 


\bibitem[S]{S} T. Suzuki. \textit{Global analysis for a two-dimensional
elliptic eigenvalue problem with the exponential nonlinearity. }Ann. Inst.
H. Poincare Anal. Non Lineaire. \textbf{9} (1992), no. 4, 367--397.

\bibitem[Y]{Y} Yi Song Yang, \textit{Self-duality of the gauge field equations and the cosmological constant}. Comm. Math. Phys. 162 (1994), no. 3, 481-498.

\bibitem[WWY] {WWY} Jun Wang, Zhi-An Wang, Wen Yang, {\em Uniqueness and convergence on equilibria of the Keller-Segel system with subcritical mass}, https://arxiv.org/abs/1804.03319. 
\end{thebibliography}
\end{document}